 \newtheorem{theorem}{Theorem}[section]
 \newtheorem{lemma}[theorem]{Lemma}
 \newtheorem{corollary}[theorem]{Corollary}
 \theoremstyle{definition}
 \newtheorem{definition}[theorem]{Definition}
 \theoremstyle{remark}
 \newtheorem{remark}[theorem]{Remark}
 \numberwithin{equation}{section}
\begin{document}
    \setcounter{page}{1}
\title[Automorphisms of
linear functional graphs over vector spaces]{Automorphisms of linear
functional graphs over vector spaces}

\author[ A. Majidinya ]{ Ali Majidinya }

\address{Department of
Mathematical Sciences, Salman Farsi University of Kazerun, Kazerun,
Iran, P.O. Box 73175-457.}
\email{\textcolor[rgb]{0.00,0.00,0.84}{ali.majidinya@gmail.com and
ali.majidinya@kazerunsfu.ac.ir}}

\subjclass[2010]{05C25, 05C50, 05C40, 05C69.}

\keywords{Automorphisms of graphs; Linear functional graphs; graphs
and linear algebra.}

\begin{abstract}
Let $\mathbb{F}_q$ be a finite field with $q$ elements, $n\geq2$ a
positive integer, $\mathbb{V}_0$ a $n$-dimensional vector space over
$\mathbb{F}_q$ and $\mathbb{T}_0$ the set of all linear functionals
from $\mathbb{V}_0$  to $\mathbb{F}_q$. Let
$\mathbb{V}=\mathbb{V}_0\setminus\{0\}$ and
$\mathbb{T}=\mathbb{T}_0\setminus\{0\}$. The \emph{linear functional
graph} of $\mathbb{V}_0$ dented by $\digamma(\mathbb{V})$, is an
undirected bipartite graph, whose vertex set $V$ is partitioned into
two sets as $V=\mathbb{V}\cup \mathbb{T}$  and two vertices $v\in
\mathbb{V}$ and $f\in \mathbb{T}$ are adjacent if and only if $f$
sends $v$ to the zero element of $\mathbb{F}_q$ (i.e. $f(v)=0$). In
this paper, the structure of all automorphisms of this graph is
characterized and formolized. Also the cardinal number of
automorphisms group
 for this graph is determined.
\end{abstract}
\maketitle
\section*{\bf Introduction}
There are many investigations and studies on various graphs
associated to algebraic structures, for example, graphs associated
to the vector spaces, modules, rings and groups, for instance see
\cite{ak2}, \cite{Das}, \cite{Matczuk1} and \cite{wang2}. There are
results about the graphs associated to symplectic spaces, orthogonal
spaces or unitary spaces over finite fields, we refer the reader to
\cite{Gu1} and \cite{Gu2}. One of the most important object for the
graphs related to algebraic structures, is the automorphism group of
these graphs. There are many studies about the automorphism groups
of
 the graphs see \cite{Gu3}, \cite{Wliu1},
 \cite{ZWan1}, \cite{LWang1} and \cite{LWang2}. In particular for
automorphisms of the graphs related to the vector spaces see
\cite{XWang1} and references therein. Wong et al. in \cite{wong1}
have characterized the automorphisms of the zero-divisor graph,
whose vertex set consists of all rank one upper triangular matrices
over a finite field.\par
 In \cite{XWang1} Wang et al. have studied  the
transformation graphs of vector spaces. They investigated the
problem of whether or not a linear transformation sends a vector to
the zero vector. The problem was interpreted by language of graph
theory more explicitly. They defined the transformation graph of a
vector space over a finite field and studied the structure
parameters of this graph, like diameters, domination numbers  and
automorphisms. In {\cite[page13, part(c) ]{XWang1}} the authors have
asked a question about the structure of the automorphisms of a graph
related to linear functionals of the vector spaces and have
interested in the problem of whether or not a linear functional of a
vector space sends a vector to the zero.\par

Following \cite{XWang1}, in \cite{wang1} Wang defined the dual graph
of vector space $\mathbb{V}_0$ over a finite field $\mathbb{F}_q$
denoted by $DG(\mathbb{V})$, with the bipartite two coloring vertex
set $V=X\cup X^*$,
 where $X$ is the set of one-dimensional subspaces of $\mathbb{V}_0$ and $X^*$
 is the set of one-dimensional subspaces of dual space of
 $\mathbb{V}_0$ and two vertices $S\in X$ and $T\in X^*$ are adjacent if and only if
  $f(s)=0$ for all $f\in T$ and all $s\in S$. In
 \cite{XWang1} the author have determined the domination number, independence
number, diameter and girth of $DG(\mathbb{V})$, respectively, also
 such a graph is proved to be distance transitive.\par
Note that the vertex set of dual graph $DG(\mathbb{V})$ in
\cite{wang1} consists of one-dimensional subspaces. But the vertex
set in \cite{XWang1} consists of nonzero vectors and nonzero
transformations.
\par
 Following {\cite[page13, part(c) ]{XWang1}}
  the aim of this paper is to
introduce a graph related to the linear functionals on a vector
space $\mathbb{V}_0$ over a finite field $\mathbb{F}_q$, said to be
the \emph{linear functional graph of the vector space
$\mathbb{V}_0$}, which is
 denoted by $\digamma(\mathbb{V}) $, where $\mathbb{V}=\mathbb{V}_0\setminus
 \{0\}$. It is
interesting to think about the problem whether or not a linear
functional sends a vector to the zero, in the language of graph
theory to investigate  the {\cite[page13, part(c) ]{XWang1}} and
{\cite[page12, Remark ]{wang1}}. The main role of cardinal number
for a finite group is well-known, so determining the cardinal number
for the automorphism group for these kinds of finite graphs is
another motivation to define the linear functional graphs. The first
introductional section contains definitions, elementary observations
( like domination number, connectivity and regularity of the graphs)
and some needed later on. In section 2 the concentrating is on
characterizing the structure of automorphisms for these graphs and
this is done in theorems \ref{thn=2} and \ref{mainth2} for
2-dimensional and $n$-dimensional vector spaces ($n\geq3$),
respectively. The cardinal number of the automorphism group of
$\digamma(\mathbb{V})$ is determined in section 3 according to the
theorems \ref{card2} and \ref{permutSNS}. Finally, the section 4
concentrates on formolizing all of the elements of automorphism
group of the graph $\digamma(\mathbb{V})$(see theorems
\ref{n=3theorem} and \ref{formn=2}).
\section{\bf Preliminaries}
Let $\digamma=(V,E)$ be a simple graph with the vertex set $V$ and
edge set $E$. Then
 $\deg_{\digamma}(v)$ stands for the  \it degree \rm of $v\in V$, i.e.
the cardinality of the set of all vertices  which are adjacent to
$v$. It is written $u\sim v$ if $u$ and $v$ are  adjacent vertices
in the graph. A graph $\digamma=(V, E)$ is said to be a\emph{
bipartite graph} with vertex bipartition $V=X\cup Y$ if every edge
of the graph has one end in $X$ and another end in $Y$. A subset $D$
of the vertices of the graph $\digamma$ is called a \emph{dominating
set}, if every vertex in $V\setminus D$ is adjacent to at least one
vertex of $D$. The minimum size of such a subset is called the
\emph{domination number} of $\digamma$. For a bipartite graph
$\digamma$ with vertex bipartition $V=X\cup Y$, a subset $Z$ of $X$
is said to be a dominating set of $Y$ if any vertex in $Y$ is
adjacent to at least one vertex of $Z$, and the domination number of
$Y$ in this bipartite graph is the minimum size of a dominating set
of $Y$.
 For a positive integer $k$, the graph $\digamma=(V, E)$ is said to
  be \it  $k$-regular\rm , if $\deg_{\digamma}(v)=k<\infty$
 for every $v\in V$.
For the positive integers $m,n$ the \emph{complete  bipartite} graph
$\mathcal{K}_{m,n}$ is a bipartite graph, with vertex bipartition
$V=X\cup Y$, such that $|X|=m$ and $|Y|=n$ and for every two
vertices $x\in X $ and $y\in Y$, $x$ and $y$ are adjacent. Two
simple nonempty graphs $\digamma=(V,E)$ and $\digamma'=(V',E')$ are
said to \emph{isomorphic} if there exists a one to one
correspondence $\rho:V\rightarrow V'$, such that for every  $x,y\in
V$, $x$ and $y$ are adjacent in $\digamma$ if and only if $\rho(x)$
and $\rho(y)$ are adjacent in $\digamma'$. Such a $\rho$ is said to
be a \emph{graph isomorphism} from $\digamma$ to $\digamma'$. If
$\rho$ is an isomorphism form the graph $\digamma$ to $\digamma$,
then $\rho$ is said to be an \emph{automorphism} of the graph
$\digamma$. The subgraph of $\digamma=(V,E)$ induced by a nonempty
subset $A\subseteq V$ is denoted by $\langle A\rangle$.\par
Trough this paper assume $\mathbb{F}_q$ is a finite field with $q$
elements, $n\geq 2$ is an integer and $\mathbb{V}_0=\mathbb{F}_q^n$
is the $n$-dimensional column  vector space on $\mathbb{F}_q$. Let
$\mathbb{T}_0$ be the set of all linear functionals from
$\mathbb{V}_0$ to $\mathbb{F}_q$. Let $e_i$, with $1\leq i\leq n$,
be the vector in $\mathbb{F}^n_q$ whose $i$th entry  is 1 and all
other entries are 0. Any vector $v\in \mathbb{V}$ can be written as
$v = \sum^n_{i=1} a_ie_i$ with $a_i\in \mathbb{F}_q$.
 Then for an element
$u=\sum^n_{i=1} u_ie_i\in \mathbb{V}_0$ consider the map
$f_u:\mathbb{V}_0\rightarrow \mathbb{F}_q$ with
$f_u(x)=u_1x_1+u_2x_2+\cdots+ u_nx_n$, for every $x=\sum^n_{i=1}
x_ie_i\in \mathbb{V}_0$. Clearly $f_u$ is a linear functional on
$\mathbb{V}_0$. It is known that every linear functional from
$\mathbb{V}_0$ to $\mathbb{F}_q$ is of the form $f_u$ for a unique
$u\in \mathbb{V}_0$(see\cite{Hoffman}). Indeed, for every $u,x \in
\mathbb{V}_0$, $f_u(x)=u\cdot x$, the dot product( standard inner
product) of the vectors $u$ and $x$ in the vector space
$\mathbb{V}_0$. Moreover, if we consider the elements of
$\mathbb{V}_0$ as $n\times 1$ matrices such a $u=[u_1\,\, u_2\ldots
u_n]^T$, then $f_u(x)=u^Tx$, the product of two matrices $u^T$ and
$x$, where $u^T$ is the transpose of the matrix $u$. Trough this
paper we also use the notations $\mathbb{V}_0$,
$\mathbb{V}=\mathbb{V}_0\setminus \{0\}$, $\mathbb{T}_0$ and
$\mathbb{T}=\mathbb{T}_0\setminus \{0\}$ as above known notations.
In  a  vector  space  of  dimension \emph{n},  a subspace  of
dimension (\emph{n-}1) is called a \emph{hyperspace}. Such spaces
are sometimes called hyperplanes or subspaces of codimension 1. For
more definitions see \cite{Hoffman}.
\begin{remark}
Note that $\mathbb{T}_0$ is a $n$-dimensional vector space over
$\mathbb{F}_q$ with the vector addition $f_u+f_v=f_{u+v}$ and the
scalar multiplication  $rf_u=f_{ru}$ for every $r\in \mathbb{F}_q$
and $u,v\in \mathbb{V}_0$. Moreover,
$|\mathbb{T}_0|=|\mathbb{V}_0|=q^n$ and $\mathbb{T}_0$ and
$\mathbb{V}_0$ are isomorphic vector spaces as
$\mathbb{F}_q$-modules.
\end{remark}

\begin{definition}\label{a1} Let $n\geq 2$ be an integer,
$\mathbb{F}_q$ a field with $q$ elements,
$\mathbb{V}_0=\mathbb{F}^n_q$ as vector space over the field
$\mathbb{F}_q$ and $\mathbb{T}_0$ the set of linear functionals from
$V_0$ into $\mathbb{F}_q$. The \emph{linear functional graph} of
 $\mathbb{V}_0$,
 denoted by $\digamma(\mathbb{V})$, is a bipartite graph, whose vertex set $V$
  is partitioned into
 sets as $V=\mathbb{T}\cup \mathbb{V}$ such that
$\mathbb{V}=\mathbb{V}_0\setminus \{0\}$ is the set of all nonzero
vectors of $\mathbb{V}$ and $\mathbb{T}=\mathbb{T}_0\setminus\{0\} $
is the set of all nonzero linear functionals on $\mathbb{V}_0$,
where two vertices $f_u\in \mathbb{T}$ and $v\in \mathbb{V}$ are
adjacent if and only if $f_u(v)=0$ (i.e. $v \in ker(f_u)$ the null
space of the linear functional $f_u$).
\end{definition}
\begin{lemma}\label{regular}
The degree of every vertex of $\digamma(\mathbb{V})$ is $q^{n-1}$-1
and hence $\digamma(\mathbb{V})$ is a
\emph{(}$q^{n-1}$-1\emph{)}-regular graph.
\end{lemma}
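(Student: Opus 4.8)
The plan is to compute the degree separately for the two kinds of vertices and to show both equal $q^{n-1}-1$. The unifying observation is that in each case the neighbourhood of a vertex is precisely the set of nonzero points of a hyperspace of an $n$-dimensional $\mathbb{F}_q$-vector space; and such a hyperspace, having dimension $n-1$, contains exactly $q^{n-1}$ elements and hence $q^{n-1}-1$ nonzero elements. So the whole proof reduces to identifying the relevant hyperspace in each case and applying this count.

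First I would treat a functional vertex $f_u\in\mathbb{T}$. By Definition~\ref{a1} its neighbours are exactly the nonzero vectors $v\in\mathbb{V}$ with $f_u(v)=0$, that is, the nonzero elements of $\ker(f_u)$. Since $f_u\neq 0$, the rank--nullity theorem (or a direct count of the cosets of $\ker(f_u)$, one for each value in the image $\mathbb{F}_q$) shows that $\ker(f_u)$ is a hyperspace, so $|\ker(f_u)|=q^{n-1}$ and consequently $\deg_{\digamma(\mathbb{V})}(f_u)=q^{n-1}-1$.

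Next I would treat a vector vertex $v\in\mathbb{V}$. Its neighbours are the nonzero functionals $f_u\in\mathbb{T}$ with $f_u(v)=0$. Using the description $f_u(v)=u\cdot v=v\cdot u$ and fixing $v$, the assignment $u\mapsto v\cdot u$ is itself a linear functional on $\mathbb{V}_0$, and it is nonzero precisely because $v\neq 0$; hence its kernel $W=\{u\in\mathbb{V}_0 : v\cdot u=0\}$ is again a hyperspace with $|W|=q^{n-1}$. Because the map $u\mapsto f_u$ is a bijection $\mathbb{V}_0\to\mathbb{T}_0$ sending $0$ to the zero functional, the nonzero elements of $W$ correspond bijectively to the functional-neighbours of $v$ lying in $\mathbb{T}$, giving $\deg_{\digamma(\mathbb{V})}(v)=q^{n-1}-1$ as well.

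Combining the two cases, every vertex has degree $q^{n-1}-1$, so $\digamma(\mathbb{V})$ is $(q^{n-1}-1)$-regular. The argument is essentially a routine counting exercise; the only point requiring a little care is the vector case, where one must use the canonical identification $u\leftrightarrow f_u$ so that the nonzero $u$ correspond to genuine vertices of $\mathbb{T}$ (and not merely of $\mathbb{T}_0$), thereby correctly excluding the zero functional from the count.
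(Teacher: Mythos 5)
Your proof is correct and follows essentially the same route as the paper: count the neighbours of a functional vertex $f_u$ as the nonzero elements of the hyperspace $\ker(f_u)$, then handle a vector vertex $v$ via the symmetry of the dot product (your map $u\mapsto v\cdot u$ is exactly $f_v$, which is the paper's observation that $v\sim f_u$ iff $f_v\sim u$). Your version is slightly more explicit about the bijection $u\leftrightarrow f_u$ and the exclusion of the zero functional, but the substance is identical.
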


\begin{proof} Assume
  $u\in \mathbb{V}$ is an arbitrary element.
Then linear functional $f_u:\mathbb{F}^n_{q}\rightarrow
\mathbb{F}_q$ is an $\mathbb{F}_q$-module epimorphism. Hence the
quotient module $\frac{\mathbb{F}_q^n}{ker(f_u)}$ and $\mathbb{F}_q$
are isomorphic as $\mathbb{F}_q$-modules.
 So $ker(f_u)$ contains  exactly $q^{n-1}-1$ nonzero elements.
  Thus $deg_{_{\digamma(\mathbb{V})}}(f_u)$$=$$q^{n-1}-1$. Then for every $v\in \mathbb{V}$,
 clearly $v\sim f_u$ if and only if $f_v\sim u$.
  Hence
  $q^{n-1}-1=deg_{_{\digamma(\mathbb{V})}}(f_u)=deg_{_{\digamma(\mathbb{V})}}(u)$.
So the degree of every vertex of $\digamma(\mathbb{V})$ is
$q^{n-1}$-1 and hence $\digamma(\mathbb{V})$ is a
($q^{n-1}$-1)-regular graph.
\end{proof}
\begin{lemma}\label{a2} The sets
 $\mathbb{V}$ and $\mathbb{T}$ have the same domination
numbers in $\digamma(\mathbb{V})$ and it is $q+1$.
\end{lemma}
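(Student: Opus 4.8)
The plan is to reduce the domination problem to a classical hyperspace-covering problem. First I would observe that a nonzero functional $f_u\in\mathbb{T}$ is adjacent precisely to the nonzero vectors lying in its kernel $\ker(f_u)$, which is a hyperspace of $\mathbb{V}_0$ by the quotient argument already used in Lemma \ref{regular}. Consequently a subset $Z\subseteq\mathbb{T}$ dominates $\mathbb{V}$ if and only if the hyperspaces $\{\ker(f_u): f_u\in Z\}$ cover every nonzero vector, that is $\bigcup_{f_u\in Z}\ker(f_u)=\mathbb{V}_0$ (the origin lies in every kernel). Thus the domination number of $\mathbb{V}$ equals the least number of hyperspaces of $\mathbb{V}_0$ whose union is all of $\mathbb{V}_0$.

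For the upper bound I would exhibit an explicit covering by $q+1$ hyperspaces. Fix a subspace $U$ of codimension $2$; the hyperspaces containing $U$ correspond bijectively to the lines through the origin of the $2$-dimensional quotient $\mathbb{V}_0/U$, of which there are exactly $q+1$. Since these $q+1$ lines partition the nonzero vectors of $\mathbb{V}_0/U$, their preimages, namely the $q+1$ hyperspaces containing $U$, cover all of $\mathbb{V}_0$. This produces a dominating set of $\mathbb{V}$ of size $q+1$, so the domination number is at most $q+1$.

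For the lower bound I would use a counting (union) estimate. Distinct hyperspaces each contain $q^{n-1}-1$ nonzero vectors, so if $k$ hyperspaces cover the $q^n-1$ nonzero vectors of $\mathbb{V}_0$ then $q^n-1\le k(q^{n-1}-1)$, whence $k\ge \frac{q^n-1}{q^{n-1}-1}$. A short computation shows that $q<\frac{q^n-1}{q^{n-1}-1}\le q+1$ for every $n\ge 2$, forcing the integer $k$ to satisfy $k\ge q+1$. Combined with the previous paragraph, this gives that the domination number of $\mathbb{V}$ is exactly $q+1$.

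Finally, the equality of the two domination numbers follows from the symmetry of the graph. Since the dot product is symmetric, the correspondence $v\mapsto f_v$ and $f_u\mapsto u$ interchanges $\mathbb{V}$ and $\mathbb{T}$ while preserving adjacency, because $f_u(v)=u\cdot v=v\cdot u=f_v(u)$, exactly as noted in the proof of Lemma \ref{regular}. Hence every argument above transfers verbatim to $\mathbb{T}$, and its domination number is also $q+1$. I expect the main obstacle to be the lower bound: one must check that the crude union bound is already tight here, which works only because $\frac{q^n-1}{q^{n-1}-1}$ just exceeds $q$, so that no sharper inclusion--exclusion argument is needed.
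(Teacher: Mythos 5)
Your proposal is correct, and its skeleton is the same as the paper's: a counting lower bound, an explicit family of $q+1$ functionals for the upper bound, and the symmetry $f_u\sim v \Leftrightarrow f_v\sim u$ to transfer the result from $\mathbb{V}$ to $\mathbb{T}$. The differences are in how each half is executed, and yours is leaner. For the lower bound the paper bounds $|\bigcup_{i=1}^s N(f_i)|\le s(q^{n-1}-1)-(q^{n-2}-1)$, using that two distinct kernels share at least an $(n-2)$-dimensional subspace, and then concludes $s>q$; you observe that the crude union bound $s(q^{n-1}-1)\ge q^n-1$ already forces $s>q$, since $q(q^{n-1}-1)=q^n-q<q^n-1$, so the intersection correction (and the attendant care about whether the kernels involved are genuinely distinct) is unnecessary. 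For the upper bound, your pencil of the $q+1$ hyperspaces containing a fixed codimension-$2$ subspace $U$ is exactly the paper's dominating set $\{f_{e_1+ae_2}\mid a\in \mathbb{F}_q\}\cup\{f_{e_2}\}$ in coordinate-free form (take $U$ to be the span of $e_3,\ldots,e_n$); the paper checks adjacency by direct computation with the coordinates $a_1,a_2$ of a given vector, whereas you get the covering property from the bijection between hyperspaces containing $U$ and lines of the $2$-dimensional quotient $\mathbb{V}_0/U$. One point you leave implicit: to turn a covering by hyperspaces back into a dominating set inside $\mathbb{T}$, you need that every hyperspace of $\mathbb{V}_0$ is the kernel of some nonzero functional (standard: compose the projection $\mathbb{V}_0\to \mathbb{V}_0/H\cong\mathbb{F}_q$), and this deserves a sentence. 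Net effect: your argument is slightly more conceptual and avoids an unneeded refinement; the paper's is more explicitly computational but self-contained.
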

\begin{proof} The proof is similar as that of {\cite[Lemma
3.1]{XWang1}}. By Lemma \ref{regular} for any $u\in \mathbb{V}$,
$deg_{_{\digamma(\mathbb{V})}}(f_u)=q^{n-1}-1$. Note that for every
$v\in \mathbb{V}$, $ker(f_u)$ is a hyperspace of $\mathbb{V}_0$.
 Then clearly for every two distinct elements $u,
v\in \mathbb{V} $, $ker(f_u)\cap ker(f_v)$ contains at least a
$(n-2)$-dimensional  subspace of $\mathbb{V}_0$. So $ker(f_u)\cap
ker(f_v)$ has at least $q^{n-2}-1$ nonzero elements. Suppose $\{f_1,
f_2,\ldots,f_s\}$ is a dominating set of $\mathbb{\mathbb{V}}$ in
$\digamma(\mathbb{V})$. Denote the neighbor set of $f_i$ in
${\digamma(\mathbb{V})}$ by $N(f_i)$.
 So $|\bigcup_{i=1}^sN(f_i)|\leq s(q^{n-1}-1)-(q^{n-2}-1)$.
Hence $s(q^{n-1}-1)-(q^{n-2}-1)\geq (q^n-1)$. So $s>q$ and hence
$s\geq q+1$. On the other hand we will show that the set
$\{f_{e_1+ae_2}\mid a\in \mathbb{F}_q\}\cup \{f_{e_2}\}$ is a
dominating set for $\mathbb{\mathbb{V}}$ in $\digamma(\mathbb{V})$,
with $q+1$ elements, where $e_1=(1,0,\ldots,0)$ and
$e_2=(0,1,0\ldots,0)$. To do this, let $v=\sum_{i=1}^na_ie_i\in V$.
Then if $a_2=0$ then $f_{e_2}\sim v$ and if $a_2\neq 0$ then
$f_{e_1-\frac{a_1}{a_2}e_2}\sim v$. Therefore the domination number
of $\mathbb{\mathbb{V}}$ in $\digamma(\mathbb{V})$ is $q+1$. Now,
since by Lemma \ref{regular}, $\digamma(\mathbb{V})$ is a
$(q^n-1)$-regular graph, $|\mathbb{T}|=|\mathbb{V}|$ and $f_u\sim v$
if and only if $f_v\sim u$ for every $u,v\in \mathbb{V}$, one can
prove that the domination number of $\mathbb{T}$ is also $q+1$,
similar as that of $\mathbb{V}$.
\end{proof}
\begin{theorem}\label{a4}
The domination number of $\digamma(\mathbb{V})$ is $2q+2$.
\end{theorem}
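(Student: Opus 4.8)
The plan is to establish the two bounds $\gamma(\digamma(\mathbb{V}))\le 2q+2$ and $\gamma(\digamma(\mathbb{V}))\ge 2q+2$ separately, where $\gamma$ denotes the domination number. The upper bound is the routine half and follows directly from Lemma~\ref{a2}. That lemma supplies a set $A\subseteq\mathbb{T}$ with $|A|=q+1$ dominating $\mathbb{V}$, concretely $A=\{f_{e_1+ae_2}\mid a\in\mathbb{F}_q\}\cup\{f_{e_2}\}$; using the symmetry $f_u\sim v\iff f_v\sim u$ noted in Lemma~\ref{regular}, its mirror $B=\{e_1+ae_2\mid a\in\mathbb{F}_q\}\cup\{e_2\}\subseteq\mathbb{V}$ has $|B|=q+1$ and dominates $\mathbb{T}$. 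Since $A$ and $B$ lie in opposite colour classes they are disjoint, so $D:=A\cup B$ has exactly $2q+2$ vertices; every vertex of $\mathbb{V}$ is adjacent to some element of $A$, every vertex of $\mathbb{T}$ is adjacent to some element of $B$, and the vertices of $A\cup B$ lie in $D$, so $D$ dominates $\digamma(\mathbb{V})$. This gives $\gamma(\digamma(\mathbb{V}))\le 2q+2$.

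For the lower bound I would take an arbitrary dominating set $D$ and write $D_1=D\cap\mathbb{V}$ and $D_2=D\cap\mathbb{T}$. Since $\digamma(\mathbb{V})$ is bipartite with parts $\mathbb{V}$ and $\mathbb{T}$, a vertex of $\mathbb{T}\setminus D_2$ can only be adjacent to vertices of $\mathbb{V}$; hence $D_1$ must dominate $\mathbb{T}\setminus D_2$, and symmetrically $D_2$ must dominate $\mathbb{V}\setminus D_1$. The target is to show $|D_1|+|D_2|\ge 2(q+1)$. My approach would be to re-run the counting estimate from the proof of Lemma~\ref{a2}: each vertex has degree $q^{n-1}-1$ by Lemma~\ref{regular}, and any two kernels on the same side meet in at least $q^{n-2}-1$ nonzero vectors, so the part of $\mathbb{V}$ reachable from $D_2$ has size at most $|D_2|(q^{n-1}-1)-(q^{n-2}-1)$. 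Comparing with $|\mathbb{V}|=q^n-1$ and accounting for the at most $|D_1|$ vertices that $D_1$ removes from the domination requirement, one obtains an inequality relating $|D_1|$ and $|D_2|$, together with its mirror obtained by exchanging the roles of $\mathbb{V}$ and $\mathbb{T}$; the two inequalities would then be combined to force $|D_1|+|D_2|\ge 2q+2$.

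The main obstacle is exactly this lower bound, and within it the coupling between the two colour classes. A vertex placed in $D$ dominates itself, so one cannot simply apply Lemma~\ref{a2} to each side to get $|D_2|\ge q+1$ and $|D_1|\ge q+1$ independently: a few vectors thrown into $D_1$ both dominate part of $\mathbb{T}$ \emph{and} exempt themselves from needing a $\mathbb{T}$-neighbour, and the overlap structure of kernels means this economy can be substantial. The delicate point is therefore to prove that the resources $D_1$ spends dominating $\mathbb{T}$ cannot be recycled to cheaply dominate $\mathbb{V}$, i.e. that the two domination tasks demand essentially disjoint budgets summing to $2(q+1)$. I would attempt this by treating $D_1$ and $D_2$ simultaneously and tracking, for each side, the vertices left uncovered by the opposite class, which must then lie in $D$ itself, so that any saving made on one side is paid back on the other; quantifying this trade-off sharply, rather than the routine degree count, is where the genuine work lies.
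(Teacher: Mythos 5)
Your upper bound is correct and coincides with the paper's: Lemma \ref{a2} gives a $(q+1)$-element subset of $\mathbb{T}$ dominating $\mathbb{V}$ and a $(q+1)$-element subset of $\mathbb{V}$ dominating $\mathbb{T}$, and their (disjoint) union dominates $\digamma(\mathbb{V})$. The genuine gap in your proposal is that the lower bound is never proved: you only outline a counting strategy and explicitly leave the coupling between the two colour classes unresolved. But your refusal to finesse that point is to your credit, because the step you balk at is exactly where the paper's own proof fails. The paper asserts that for a dominating set $D$ the traces $D\cap\mathbb{T}$ and $D\cap\mathbb{V}$ are ``clearly'' dominating sets of $\mathbb{V}$ and of $\mathbb{T}$ in the sense of Lemma \ref{a2}; this ignores self-domination, since a vertex of $\mathbb{V}\cap D$ is not required to have a neighbour in $D$, hence in particular not in $D\cap\mathbb{T}$. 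The flaw is fatal: the theorem as stated is false for $q=2$. Indeed, for $q=2$, $n=2$, Remark \ref{n=2} shows the graph is a disjoint union of $q+1=3$ copies of $\mathcal{K}_{1,1}$, so its domination number is $3$, not $2q+2=6$; and for $q=2$, $n=3$ (the incidence graph of the Fano plane) the set $D=\{e_1,\ e_1+e_2+e_3,\ f_{e_1},\ f_{e_1+e_2+e_3}\}$ is a dominating set of size $4<6$: if $x\notin D$ is a vector with $f_{e_1}(x)\neq 0$ and $f_{e_1+e_2+e_3}(x)\neq 0$, then $x_1=1$ and $x_2+x_3=0$, forcing $x\in\{e_1,\ e_1+e_2+e_3\}\subseteq D$, a contradiction; the argument for functionals is identical.

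Consequently no completion of your plan can give $|D_1|+|D_2|\ge 2q+2$ for all $q,n\ge 2$. It is still worth recording what your counting does deliver. Write $t=|D\cap\mathbb{V}|$, $s=|D\cap\mathbb{T}|$ and $c=q^{n-2}(q-1)$. Distinct kernels meet in exactly $q^{n-2}-1$ nonzero vectors, so $|N(D\cap\mathbb{T})|\le (q^{n-1}-1)+(s-1)c$; since $N(D\cap\mathbb{T})$ must contain $\mathbb{V}\setminus D$, whose size is at least $q^n-1-t$, one gets $s\ge q+1-t/c$, and symmetrically $t\ge q+1-s/c$. Adding the two inequalities gives $s+t\ge (2q+2)\,c/(c+1)$, which forces the integer $s+t$ to be at least $2q+2$ precisely when $c\ge 2q+2$. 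So your route actually proves the theorem whenever $q^{n-2}(q-1)\ge 2q+2$ (in particular for all $q\ge 4$ with $n\ge 3$), and it is necessarily inconclusive for small parameters, where the statement can fail. Finally, note that the double application of Lemma \ref{a2} that you rightly reject for ordinary domination is a correct argument for \emph{total} domination (where every vertex, including the members of $D$, must have a neighbour in $D$): there, every $v\in\mathbb{V}$ needs a neighbour in $D\cap\mathbb{T}$ and every $f\in\mathbb{T}$ a neighbour in $D\cap\mathbb{V}$, so the total domination number of $\digamma(\mathbb{V})$ equals $2q+2$ for all $q$ and $n$. That is the statement the paper's proof, and the easy half of yours, in fact establishes.
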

\begin{proof}
Let $D$ be a dominating set of $V=\mathbb{T}\cup \mathbb{V}$ in
$\digamma(\mathbb{V})$. Clearly $D\cap \mathbb{T}$ and $D\cap
\mathbb{V}$ are domination sets for  $\mathbb{V}$ and $\mathbb{T}$
in $\digamma(\mathbb{V})$, respectively. So by Lemma \ref{a2}, $\mid
D\cap \mathbb{T}\mid \geq q+1$ and $\mid D\cap \mathbb{V}\mid\geq
q+1$. Then  $(D\cap \mathbb{T})\cap (D\cap \mathbb{V})=\emptyset$
implies $\mid D\mid \geq 2q+2$. Note that by Lemma \ref{a2} there
are minimal dominating sets $D_1$ and $D_2$ for $\mathbb{T}$ and
$\mathbb{V}$ in $\digamma(\mathbb{V})$, respectively, such that
$\mid D_1\mid = \mid D_2\mid=q+1$. Clearly $D_1\cup D_2$ is a
dominating set for $V=\mathbb{T}\cap \mathbb{V}$ in
$\digamma(\mathbb{V})$ and  $|D_1\cup D_2|=2q+2$. So the domination
number of $\digamma(\mathbb{V})$ is $2q+2$.
\end{proof}
This section ends with the following remark about the connectivity
of $\digamma(\mathbb{V})$.
\begin{remark} \label{connactivity} Assume $n\geq 3$. Then for all
$x, y \in \mathbb{V}$, $ker(f_x)\cap ker(f_y)$ is nonzero, as
$ker(f_x)$ and  $ker(f_y)$ are hyperspaces  with dimensions grater
than 1. Then the graph $\digamma(\mathbb{V})$ is a connected graph.
To see this, note that for any $x,y \in \mathbb{V}$ such that
$f_x\neq f_y$, we have $f_x\sim y$ (and hence $x\sim f_y$)
otherwise, there exist two paths $f_x\sim a\sim f_y\sim t\sim f_a
\sim y$ and $x\sim f_a\sim y$, for a nonzero $a\in ker(f_x)\cap
ker(f_y)$ and a nonzero $t\in ker(f_y)\cap ker(f_a)$. For $n=2$ in
Remark \ref{n=2} it is shown that $\digamma(\mathbb{V})$ is a
disconnected graph.
\end{remark}
\section{\bf Twin points and the structure of automorphisms of $\digamma(\mathbb{V})$}

 For a simple graph $\digamma$$=$$(V, E)$ and any nonempty subset $A$
of $V$, denote the neighbor of the set $A$ by $N(A)=\{x\in V\mid x
\sim a$ for some $a\in A\}$. For the simplicity if $A=\{a\}$, we
denote $N(A)$ by $N(a)$. The vertices $x$ and $y$ of $\digamma$ are
said to be \emph{twin points} if $x$ and $y$ have the same neighbors
(i.e. $N(x) = N(y)$). Consider a binary relation $\mathcal{R} $ in
$V$ as: $x\mathcal{R}y$ if and only if $N(x)=N(y)$, (i.e.
$x\mathcal{R}y$ if and only if $x$ and $y$ are twin points). It is
known that $\mathcal{R} $ is an equivalence relation in $V$.  Let
$\tau$ be a mapping on the vertex set $V$ of $\digamma$, which
stabilizes every equivalent class of $V$ and acts as a permutation
on every equivalent class. It is easy to see that $\tau(x) \sim
\tau(y)$ if and only if $x \sim y$. Thus $\tau$ is an automorphism
of the graph $\digamma$, which is called a \emph{permutation of twin
points} of $\digamma$. In this section, twin points classes of the
graph $\digamma(\mathbb{V})$ are characterized. Then, as we will see
in the next, the classes of twin points play main roles in
characterizing the automorphisms and determining the cardinal number
of automorphism group of $\digamma (\mathbb{V})$.

\begin{lemma}\label{a5} The vectors $u, v\in\mathbb{V}$ are twin points of  the graph $\digamma(\mathbb{V})$ if
and only if $u$ and $v$ are linearly dependent vectors.
\end{lemma}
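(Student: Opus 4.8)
The plan is to translate the twin-point condition into a statement about the functionals annihilating $u$ and $v$, and then to exploit the nondegeneracy of the pairing $(w,x)\mapsto f_w(x)=w^Tx$. Since $\digamma(\mathbb{V})$ is bipartite with parts $\mathbb{V}$ and $\mathbb{T}$, every neighbor of a vertex $u\in\mathbb{V}$ lies in $\mathbb{T}$, so
\[
N(u)=\{\,f_w\in\mathbb{T} : f_w(u)=0\,\}=\{\,f_w : w\in\mathbb{V},\ w^Tu=0\,\}.
\]
Hence $N(u)=N(v)$ holds if and only if, for every nonzero $w\in\mathbb{V}_0$, one has $w^Tu=0\Leftrightarrow w^Tv=0$. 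This reformulation is the whole point: twin-ness among vectors is exactly the statement that $u$ and $v$ are killed by the same functionals.

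For the forward direction I would note that if $u$ and $v$ are linearly dependent, then, both being nonzero, $v=\lambda u$ for some $\lambda\in\mathbb{F}_q\setminus\{0\}$. Then $w^Tv=\lambda(w^Tu)$ for every $w$, and since $\lambda\neq0$ this yields $w^Tu=0\Leftrightarrow w^Tv=0$; thus $N(u)=N(v)$ and $u,v$ are twin points. This half is immediate from the bilinearity of the pairing.

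For the converse I would argue by contraposition: assuming $u$ and $v$ are linearly independent, I exhibit a functional that separates them. Extend $\{u,v\}$ to a basis $\{u,v,x_3,\ldots,x_n\}$ of $\mathbb{V}_0$ and let $g\in\mathbb{T}_0$ be the unique linear functional with $g(u)=0$, $g(v)=1$ and $g(x_i)=0$ for $3\leq i\leq n$; such a $g$ exists because a linear functional may be prescribed arbitrarily on a basis. Writing $g=f_w$ for its unique representing vector $w\in\mathbb{V}_0$ (as recalled in the preliminaries, every functional has this form), we have $w\neq0$ since $g\neq0$. Now $f_w(u)=0$ gives $f_w\sim u$, while $f_w(v)=1\neq0$ gives $f_w\not\sim v$, so $f_w\in N(u)\setminus N(v)$ and therefore $N(u)\neq N(v)$.

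The substance is concentrated in the converse, and even there the only real ingredient is the nondegeneracy of the pairing $f_w(x)=w^Tx$ — equivalently, the standard fact that a functional vanishing on one member of a linearly independent pair while not vanishing on the other always exists. Everything else reduces to direct manipulation of the bilinear form, so I do not anticipate any genuine obstacle.
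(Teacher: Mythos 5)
Your proof is correct, but the hard direction runs differently from the paper's. Both arguments treat the easy implication (dependence $\Rightarrow$ twin points) the same way, via bilinearity: $v=\lambda u$ with $\lambda\neq 0$ forces $w^Tu=0\Leftrightarrow w^Tv=0$. For the converse, however, the paper argues directly: $N(u)=N(v)$ means the homogeneous systems $u^Tx=0$ and $v^Tx=0$ have the same solution set, so the $1\times n$ coefficient matrices $u^T$ and $v^T$ are row-equivalent, which for nonzero row vectors means $u=rv$ for some $r\neq 0$; this leans on the standard row-equivalence theorem for systems with equal solution sets (from Hoffman--Kunze, the paper's linear algebra reference). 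You instead argue by contraposition: given linearly independent $u,v$, you extend $\{u,v\}$ to a basis and prescribe a functional $g$ with $g(u)=0$, $g(v)=1$, producing an explicit element of $N(u)\setminus N(v)$. The trade-off: the paper's route is shorter on the page but imports matrix-theoretic machinery, while yours is self-contained and more portable --- it uses only the fact that a linear functional may be prescribed arbitrarily on a basis, so it would survive essentially unchanged over any field or in settings where one prefers not to speak of row equivalence at all. Both are complete proofs of the lemma.
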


\begin{proof} Let $u$ and $v$ have the same neighbors. So
 two linear equations
$u^Tx=0$ and $v^Tx=0$ have the same solutions in the vector space
$\mathbb{V}_0$. Therefore the coefficient matrices $u^T$ and $v^T$
are row-equivalent. This implies $u=rv$ for a nonzero scaler $0\neq
r\in \mathbb{F}_q$. So $u$ and $v$ are linearly dependent. To the
converse let $u=rv$ for a scaler $0\neq r\in \mathbb{F}_q$. Then
$u^T$ and $v^T$ are row-equivalent matrices. So the linear equations
systems $u^Tx=0$ and $v^Tx=0$ have the same solutions. So
$ker(f_u)=ker(f_v)$, hence $N(u)=N(v)$ and the proof is complete.
\end{proof}
\begin{remark}\label{cardSigma} In this paper $\Sigma_0$ denotes
 the set of all 1-dimensional subspaces of
$\mathbb{V}_0=F^n_q$ and $\Sigma=\{W \setminus\{0\}\mid\, W
\in\Sigma_0\}$ the set of all 1-dimensional subspaces without zero
vector of $\mathbb{V}_0$. In \cite{XWang1} the authors showed that
   $\Sigma$ totally contains $\frac{q^n-1}{q-1}$
elements and every $S\in\Sigma $ consists of exactly $q-1$ nonzero
vectors. Also $\mathbb{V}$ is disjoint union of all $S$ for $S\in
\Sigma$. Along this paper for any $A\subseteq \mathbb{V}$  we use
the notation $\mathcal{F}_A=\{f_u| u\in A\}$. It is easy to see that
$\mathcal{F}_S$ is a 1-dimensional subspace without zero of
$\mathbb{T}_0$, for every $S\in \Sigma$ .
\end{remark}
  Now, using Lemma $\ref{a5}$ we have the following Result.
\begin{lemma}\label{a6}\par
\indent \emph{\textbf{(1)}} The vectors $u,v\in \mathbb{V}$ as the
vertices of $\digamma(\mathbb{V})$ are twin points if and only if
there exists an element $S\in\Sigma$ such
that $u,v \in S$.\\
\indent\emph{\textbf{(2)}}Two vertices $f_u,f_v\in \mathbb{T}$ are
twin points if and only if
 $u$ and $v$ are  twin points.\\
\indent\emph{\textbf{(3)}}For any $S\in\Sigma$,  $a\in S$ if and
only if
$N(S)$$=$$N(a)$ if and only if $N(\mathcal{F}_S)$$=$$N(f_a)$.\\
\indent\emph{\textbf{(4)}}For every $S_1, S_2\in \Sigma$,  $S_1\neq
S_2$ if
and only if  $N(S_1)\neq N(S_2)$.\\
\indent\emph{\textbf{(5)}}For every $S\in \Sigma$, the subgraph
$\langle S\cup N(S)\rangle$ of $\digamma(\mathbb{V})$,
    is a complete bipartite graph isomorphic with $\mathcal{K}_{q-1,q^{n-1}-1}$.
\end{lemma}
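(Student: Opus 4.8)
The whole lemma is built on two ingredients: Lemma~\ref{a5}, which says that two nonzero vectors are twin points exactly when they are linearly dependent, and the \emph{duality involution} $\phi$ on the vertex set defined by $\phi(v)=f_v$ for $v\in\mathbb{V}$ and $\phi(f_u)=u$ for $f_u\in\mathbb{T}$. Because $f_u(v)=u\cdot v=v\cdot u=f_v(u)$, adjacency is symmetric in this sense, so $\phi$ carries edges to edges and is a graph automorphism of $\digamma(\mathbb{V})$; being an automorphism, it preserves the twin-point relation. I would also record once the explicit neighborhoods $N(f_u)=\ker(f_u)\cap\mathbb{V}$ and $N(a)=\{f_w\mid w\cdot a=0\}$ for $a\in\mathbb{V}$, since the count in part~(5) uses them.

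Part~(1) is then immediate: by Lemma~\ref{a5}, $u,v$ are twin points iff they are linearly dependent, and two nonzero vectors are linearly dependent iff they lie in a common $1$-dimensional subspace, i.e.\ iff some $S\in\Sigma$ contains both. Part~(2) follows from the involution: since $\phi$ is an automorphism with $\phi(u)=f_u$ and $\phi(v)=f_v$, the pair $f_u,f_v$ is a twin pair iff $u,v$ is a twin pair, so twinness on the functional side is equivalent to twinness on the vector side.

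For part~(3), if $a\in S$ then every $b\in S$ is linearly dependent on $a$, hence a twin of $a$ by Lemma~\ref{a5}, so $N(b)=N(a)$ and therefore $N(S)=\bigcup_{b\in S}N(b)=N(a)$; conversely, if $N(S)=N(a)$, then for any fixed $b\in S$ we get $N(a)=N(S)=N(b)$, so $a$ and $b$ are twins, hence linearly dependent, and therefore $a$ spans the same line as $b$, giving $a\in S$. Applying $\phi$ together with part~(2) transports this equivalence verbatim to the functional side, yielding $a\in S\iff N(\mathcal{F}_S)=N(f_a)$. Part~(4) is a short consequence: the forward direction is trivial, while if $N(S_1)=N(S_2)$ then choosing $a\in S_1$ and $b\in S_2$ and using part~(3) gives $N(a)=N(S_1)=N(S_2)=N(b)$, so $a$ and $b$ are twins, hence linearly dependent, whence $S_1=S_2$.

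Part~(5) combines this structure with a count. Since $N(S)=N(a)$ for every $a\in S$, each vertex of $S$ is adjacent to each vertex of $N(S)$, and as $\digamma(\mathbb{V})$ is bipartite there are no edges within $S$ or within $N(S)$; hence $\langle S\cup N(S)\rangle$ is complete bipartite. Here $|S|=q-1$ by Remark~\ref{cardSigma}, and $|N(S)|=|N(a)|=q^{n-1}-1$ because $a^{\perp}$ is a hyperspace of size $q^{n-1}$ and distinct nonzero $w$ determine distinct functionals $f_w$; therefore $\langle S\cup N(S)\rangle\cong\mathcal{K}_{q-1,\,q^{n-1}-1}$. None of these steps is deep; the only point needing genuine care is the transfer between the vector side and the functional side, and packaging that transfer as the single automorphism $\phi$ is what I expect to make parts~(2) and~(3) clean rather than a repetition of the vector-side argument.
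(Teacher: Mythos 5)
Your proof is correct, and its skeleton is the same as the paper's: everything is reduced to Lemma~\ref{a5} (twin points $=$ linearly dependent vectors). The difference is one of explicitness. The paper's entire proof is two sentences: every two elements of an $S\in\Sigma$ are linearly dependent, hence by Lemma~\ref{a5} have the same neighbors, and ``using Lemma \ref{a5} one can prove all the parts.'' In particular the paper never says how the twin characterization crosses from $\mathbb{V}$ to $\mathbb{T}$ in parts (2) and (3); implicitly one is expected to rerun the row-equivalence argument of Lemma~\ref{a5} inside $\mathbb{T}_0$, which is legitimate but repetitious. Your device for that transfer --- the involution $\phi(v)=f_v$, $\phi(f_u)=u$, verified to be an automorphism from the symmetry $f_u(v)=u\cdot v=v\cdot u=f_v(u)$ and hence twin-preserving --- is exactly the map the paper itself introduces later (as $\sigma$ in Case 2 of the proof of Theorem~\ref{n=3theorem}) but does not deploy here. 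Packaging the transfer as a single automorphism buys rigor without repetition: part (2) and the second equivalence in part (3) become formal consequences of the vector-side statements, using $N(\phi(X))=\phi(N(X))$, rather than parallel arguments left to the reader. One small labelling slip in your part (4): the direction you call ``trivial'' is $N(S_1)\neq N(S_2)\Rightarrow S_1\neq S_2$ (well-definedness of $S\mapsto N(S)$), while the implication you then actually prove, $N(S_1)=N(S_2)\Rightarrow S_1=S_2$, is the contrapositive of the forward direction; both directions are covered, so nothing is missing.
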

\begin{proof}  Let $S\in \Sigma$. Then every two elements $u$ and $v$
 of $S$ are linearly
dependent. So by Lemma \ref{a5}, all elements of $S$ have the same
neighbors and using Lemma \ref{a5} one can prove all the parts.
\end{proof}

Now, the aim is characterizing the structure of all automorphisms of
the graph $\digamma(\mathbb{V})$. First we point to the two needed
especial kinds of automorphisms for this graph  below:\par
\textbf{(1)}\indent Similar as \cite{XWang1}  for any invertible
$n\times n$ matrix $P$ over $\mathbb{F}_q$, let $\chi_{_P}$  be the
mapping on the vertex set of $\digamma(\mathbb{V})$ such that:
\begin{center}
$\chi_{_P}(v) = Pv$ for $v\in\mathbb{V}$ and $\chi_{_P}(f_u) =
f_{(P^{-1})^Tu}$ for any $u\in \mathbb{V}$.
\end{center}
 Assume  $u ,v \in \mathbb{V}$. Then,
$\chi_{_P}(f_u)(\chi_{_P}(v))=f_{(P^{-1})^Tu}(Pv)$
$=u^TP^{-1}(Pv)=u^Tv=f_u(v)$.  Hence $f_u(v)=0$ if and only if
$\chi_{_P}(f_u)(\chi_{_P}(v))=0$. So $\chi_{_P}$ is an automorphism
of the graph $\digamma(\mathbb{V})$, which we said to be the \emph{
regular automorphism} of $\digamma(\mathbb{V})$ induced by matrix
$P$.\par
 \textbf{(2)}\indent For any field automorphism $\pi$ of
$\mathbb{F}_q$ extend $\pi$ to vertices of $\digamma(\mathbb{V})$ as
follows:\par
 For every $v=\sum_{i=1}^nv_ie_i\in \mathbb{V}$, define
  $\pi (v)=\sum_{i=1}^n\pi(v_i)e_i$
 and for every $u\in \mathbb{V}$ define $\pi(f_u)=f_{\pi(u)}$.
Then one can see that this extending of $\pi$ on the vertices of
$\digamma (\mathbb{V})$ is an automorphism of the graph $\digamma
(\mathbb{V})$, which we said to be the\emph{ extending of field
automorphism} $\pi$ of the graph $\digamma(\mathbb{V})$.
\begin{remark} \label{cardsigma}We define  for the set $\Sigma$, the set $N(\Sigma)=\{N(S)\mid
S\in \Sigma\}$. So by Lemma \ref{a6} part 4 and Remark
\ref{cardSigma}, $|\Sigma|=|N(\Sigma)|=\frac{q^n-1}{q-1}$.
 \end{remark}
\begin{lemma}\label{a7} Let $\rho:V\rightarrow V$ be a permutation on the vertex set of
$\digamma(\mathbb{V})$ such that  $(\rho(H), \rho(N(H)))\in
\{(S,N(S))|\, S\in \Sigma\}$, for all $H\in \Sigma$. Then $\rho$ is
an automorphism of the graph $\digamma(\mathbb{V})$.
\end{lemma}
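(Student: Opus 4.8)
The plan is to show directly that $\rho$ preserves adjacency in both directions, after first checking that $\rho$ respects the bipartition $V=\mathbb{V}\cup\mathbb{T}$. The whole argument rests on organizing the neighbour-set identities recorded in Lemma \ref{a6}.

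First I would establish that $\rho$ restricts to a bijection of each colour class. By Remark \ref{cardSigma} the sets $S\in\Sigma$ partition $\mathbb{V}$, and the hypothesis gives, for each $H\in\Sigma$, some $S\in\Sigma$ with $\rho(H)=S\subseteq\mathbb{V}$; hence $\rho(\mathbb{V})\subseteq\mathbb{V}$. On the other side, since $\digamma(\mathbb{V})$ is $(q^{n-1}-1)$-regular with $q^{n-1}-1\geq1$ by Lemma \ref{regular}, no vertex is isolated, so every $f\in\mathbb{T}$ is a neighbour of some vector and therefore lies in $N(H)$ for the class $H$ containing that vector; thus $\mathbb{T}=\bigcup_{H\in\Sigma}N(H)$. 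The hypothesis $\rho(N(H))=N(S)\subseteq\mathbb{T}$ then yields $\rho(\mathbb{T})\subseteq\mathbb{T}$. As $\rho$ is a permutation of the finite set $V=\mathbb{V}\cup\mathbb{T}$ sending each part into itself, it restricts to a bijection on $\mathbb{V}$ and on $\mathbb{T}$.

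Since the graph is bipartite, the only adjacencies to check are between a vector $v\in\mathbb{V}$ and a functional $f\in\mathbb{T}$; pairs lying in a common colour class are non-adjacent both before and after applying $\rho$. Fix such $v,f$ and let $H\in\Sigma$ be the unique class with $v\in H$. The hypothesis supplies $S\in\Sigma$ with $\rho(H)=S$ and $\rho(N(H))=N(S)$; in particular $\rho(v)\in S$. By Lemma \ref{a6}(3) we have $N(v)=N(H)$ and $N(\rho(v))=N(S)$, since $v\in H$ and $\rho(v)\in S$. Then the chain of equivalences
\[
v\sim f \iff f\in N(v)=N(H) \iff \rho(f)\in\rho(N(H))=N(S)=N(\rho(v)) \iff \rho(v)\sim\rho(f)
\]
holds, where the central step uses the injectivity of $\rho$. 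This shows $\rho$ preserves adjacency in both directions, so $\rho$ is an automorphism of $\digamma(\mathbb{V})$.

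The computation is essentially bookkeeping once Lemma \ref{a6} is in hand. The one point requiring care, which I regard as the main obstacle, is that the hypothesis pairs each class $H$ with a \emph{single} $S$ governing both $\rho(H)=S$ and $\rho(N(H))=N(S)$ simultaneously; this coupling is exactly what links the domain-side identity $N(v)=N(H)$ to the image-side identity $N(\rho(v))=N(S)$ and makes the central equivalence go through. Beyond verifying that this pairing is used consistently, I do not expect any genuine difficulty.
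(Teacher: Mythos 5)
Your proof is correct and follows essentially the same route as the paper's: both arguments exploit the coupling $(\rho(H),\rho(N(H)))=(S,N(S))$ for a single $S\in\Sigma$ together with the injectivity of $\rho$ to obtain the reverse implication, the only cosmetic difference being that you invoke part (3) of Lemma \ref{a6} (equality of neighbourhoods within a class) where the paper invokes the complete bipartite structure of $\langle S\cup N(S)\rangle$ from part (5). Your preliminary verification that $\rho$ preserves the bipartition $V=\mathbb{V}\cup\mathbb{T}$ is a point the paper leaves implicit (it only checks mixed pairs $f_u,v$), so it is a welcome bit of extra care rather than a divergence in method.
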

\begin{proof} Note that the assumption implies
$N(\rho(H))=\rho(N(H))$, for all $H\in \Sigma$. Let $f_u\sim v$ for
some $u,v\in \mathbb{V}$. We claim that $\rho(f_u)\sim\rho(v)$.
Since $f_u(v)=0$, there exist a unique $H\in\Sigma $ such that $v\in
H$ and $f_u\in N(H)$. By Lemma \ref{a6} for any $S\in \Sigma$, the
subgraph  $\langle S\cup N(S)\rangle$ is isomorphic with the
complete bipartite graph $\mathcal{K}_{q-1, q^{n-1}-1}$. By
assumption, $(\rho(H), \rho(N(H)))\in \{(S,N(S))|\, S\in \Sigma\}$.
Hence there exists an element $S\in \Sigma$ such that $\rho(H)=S$
and $\rho(N(H))=N(S)$. Hence $\rho (v)\in S$ and $\rho(f_u)\in
N(S)$. So $\rho(f_u)(\rho(v))=0$ and the claim is proven.
Conversely, let $\rho(f_u)(\rho(v))=0$, for two elements $u,v\in
\mathbb{V}$. Then there exists a unique $H\in \Sigma$ such that
$v\in H$. Let $\rho(H)=S$ for a $S\in \Sigma$. Then
 $\rho(f_u)(\rho(v))=0$ implies $\rho(f_u)\in N(\rho(H))=\rho(N(H))=N(S)$. Hence
  $\rho(f_u)=\rho(f_{u'})$, for
 a $f_{u'}\in N(H)$.
 Since $\rho$ is one to one correspondence, we have $f_u=f_{u'}\in
 N(H)$. So $f_u(v)=0$ as the subgraph $\langle H\cup N(H)\rangle$ of
 $\digamma(\mathbb{V})$ is a
complete bipartite graph. Hence $\rho$ is an automorphism of $\digamma(\mathbb{V})$.\\
\end{proof}

\begin{corollary}
Let $\rho$ be a permutation map on the vertex set of
$\digamma(\mathbb{V})$ such that  $\rho(H)$$=$$H$ and
$\rho(N(H))$$=$$N(H)$, for all $H\in\Sigma$. Then $\rho$ is an
automorphism of $\digamma(\mathbb{V})$.
\end{corollary}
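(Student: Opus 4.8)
The plan is to recognize that this corollary is nothing more than the special case of Lemma~\ref{a7} in which each one-dimensional punctured subspace $H\in\Sigma$ is sent to itself rather than to some other member of $\Sigma$. Concretely, I would start from the hypotheses $\rho(H)=H$ and $\rho(N(H))=N(H)$ and form the pair $(\rho(H),\rho(N(H)))=(H,N(H))$. The only thing to check is that this pair lies in the indexing set $\{(S,N(S))\mid S\in\Sigma\}$ appearing in Lemma~\ref{a7}; this is immediate upon choosing $S=H$, since $H\in\Sigma$ by hypothesis and then $(H,N(H))=(S,N(S))$ with that choice of $S$.

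Having verified the inclusion $(\rho(H),\rho(N(H)))\in\{(S,N(S))\mid S\in\Sigma\}$ for every $H\in\Sigma$, the hypotheses of Lemma~\ref{a7} are fully satisfied by $\rho$. I would then simply invoke that lemma to conclude that $\rho$ preserves adjacency in both directions, i.e.\ $\rho(f_u)\sim\rho(v)$ if and only if $f_u\sim v$, so $\rho$ is an automorphism of $\digamma(\mathbb{V})$. No separate adjacency argument is needed here, because all of that work has already been discharged inside the proof of Lemma~\ref{a7} (which in turn relies on the complete bipartite structure of each $\langle S\cup N(S)\rangle$ established in Lemma~\ref{a6}).

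I do not expect any genuine obstacle: the corollary is a direct specialization, and the entire content of its proof is the trivial observation that fixing every class $H$ and every class $N(H)$ is a particular way of producing a pair $(\rho(H),\rho(N(H)))$ of the required form. The only thing one must be slightly careful about is that $\rho$ is assumed to be a \emph{permutation} of $V$ (so that it is a genuine bijection on the vertex set), which is precisely what Lemma~\ref{a7} also requires; thus the bijectivity hypothesis carries over verbatim and nothing further must be arranged by hand.
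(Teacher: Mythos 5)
Your proof is correct and matches the paper's intent exactly: the paper states this corollary immediately after Lemma~\ref{a7} with no separate proof, treating it as the direct specialization $(\rho(H),\rho(N(H)))=(H,N(H))$, obtained by taking $S=H$ in the indexing set $\{(S,N(S))\mid S\in\Sigma\}$, which is precisely your argument.
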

\begin{definition} Let $\rho :V\rightarrow V$ be a permutation map on the vertex set of
$\digamma(\mathbb{V})$ such that  $(\rho(H), \rho(N(H)))\in
\{(S,N(S))|\, S\in \Sigma\}$ for all $H\in \Sigma$. Then we say
$\rho$ is a  \emph{one-dimensional subspaces permutation
automorphism} of graph $\digamma(\mathbb{V})$. In particular when
$\rho(H)=H$ and $\rho(N(H))=N(H)$, for all $H\in \Sigma$, then
$\rho$ is said to be a \emph{twin points automorphism} of the graph
$\digamma(\mathbb{V})$.

\end{definition}
\begin{lemma}\label{4partition}
Let $\rho$ be  an automorphism of the graph
$\digamma(\mathbb{V})$. Then:\\
\indent \emph{\textbf{(1)}} If $\rho (X)\subseteq \mathbb{T}$ for
some
  non empty $X\subseteq \mathbb{V}$, then
  $\rho( N(X))\subseteq \mathbb{V}$.\\
 \indent \emph{\textbf{(2)}} If $\rho (Y)\subseteq \mathbb{V}$ for some
  non empty $Y\subseteq \mathbb{T}$, then $\rho( N(Y))\subseteq \mathbb{T}$.\\
\indent \emph{\textbf{(3)}} For any $S\in \Sigma$ either
$\rho(N(S))\subseteq
\mathbb{V}$ or $\rho(N(S))\subseteq \mathbb{T}$. \\
\indent \emph{\textbf{(4)}} For any $S\in \Sigma$ either
$\rho(S)\subseteq \mathbb{V}$ or $\rho(S)\subseteq \mathbb{T}$.
 \end{lemma}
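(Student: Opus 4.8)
The plan is to exploit the bipartite structure of $\digamma(\mathbb{V})$ together with the fact that the automorphism $\rho$ preserves adjacency and non-adjacency. The one observation that drives every part is this: since every edge of $\digamma(\mathbb{V})$ joins a vertex of $\mathbb{V}$ to a vertex of $\mathbb{T}$, once we know in which colour class the image $\rho(z)$ of a single vertex $z$ lies, the images of all neighbours of $z$ are forced into the opposite class. So the whole lemma reduces to choosing a convenient representative vertex in each case and reading off where its image sits.

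For part (1) I would first note that $X\subseteq\mathbb{V}$ forces $N(X)\subseteq\mathbb{T}$ by bipartiteness. Then for an arbitrary $w\in N(X)$ I pick some $x\in X$ with $w\sim x$; since $\rho$ is an automorphism, $\rho(w)\sim\rho(x)$. By hypothesis $\rho(x)\in\mathbb{T}$, and in the bipartite graph $\digamma(\mathbb{V})$ every neighbour of a vertex of $\mathbb{T}$ lies in $\mathbb{V}$, whence $\rho(w)\in\mathbb{V}$. As $w\in N(X)$ was arbitrary, $\rho(N(X))\subseteq\mathbb{V}$. Part (2) is the mirror image, obtained by interchanging the roles of $\mathbb{V}$ and $\mathbb{T}$ throughout.

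For parts (3) and (4) the extra ingredient is that all vertices of a fixed $S\in\Sigma$ are twins: by Lemma \ref{a6}(3) we have $N(a)=N(S)$ for every $a\in S$, so every vertex of $N(S)$ is adjacent to every vertex of $S$ (this is also recorded as the complete bipartite structure $\langle S\cup N(S)\rangle\cong\mathcal{K}_{q-1,q^{n-1}-1}$ in Lemma \ref{a6}(5)). Note that both sets are nonempty, since $|S|=q-1\geq 1$ and, by Lemma \ref{regular}, each vertex has positive degree $q^{n-1}-1\geq 1$. For (3) I fix a single $s\in S$; then every element of $N(S)$ is a neighbour of $s$, so every element of $\rho(N(S))$ is a neighbour of the one vertex $\rho(s)$. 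If $\rho(s)\in\mathbb{V}$ then by bipartiteness $\rho(N(S))\subseteq\mathbb{T}$, while if $\rho(s)\in\mathbb{T}$ then $\rho(N(S))\subseteq\mathbb{V}$. For (4) I symmetrically fix a single $w\in N(S)$, which is adjacent to every element of $S$; then every element of $\rho(S)$ is a neighbour of $\rho(w)$, and the colour class of the single vertex $\rho(w)$ again determines the class containing $\rho(S)$.

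I do not expect a serious obstacle here: each part is a short, direct application of bipartiteness, and the only points needing care are verifying that the relevant sets are nonempty (so that a representative $s$ or $w$ exists) and keeping straight which class is playing the "source" role in each dichotomy. As an alternative route for (3) and (4), one could observe that $\rho$ sends twin classes to twin classes, because $N(\rho(z))=\rho(N(z))$, and that by Lemma \ref{a6} every twin class lies entirely in $\mathbb{V}$ or entirely in $\mathbb{T}$ (a cross-class twin pair would require an empty neighbourhood, which is impossible by Lemma \ref{regular}); however, the representative-vertex argument above is more economical and avoids invoking the full classification of twin classes.
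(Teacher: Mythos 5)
Your proof is correct and takes essentially the same approach as the paper's: parts (1)--(2) are the same bipartiteness-plus-adjacency-preservation argument, and parts (3)--(4) use the same key fact that $N(x)=N(S)$ for every $x\in S$ (Lemma \ref{a6}), reducing everything to the colour class of a single representative's image. Your write-up merely makes explicit the nonemptiness checks and the choice of representative that the paper leaves implicit.
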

\begin{proof}
\textbf{(1)} Since $\rho$ is an automorphism, then every element of
$\rho(N(X))$ is adjacent to an element $\rho(X)\subseteq
\mathbb{T}$. Hence $\rho( N(X))\subseteq \mathbb{V}$.\par
\indent \textbf{(2)} The proof is similar to that of the part 1.\\
\indent \textbf{(3)} By Lemma \ref{a6} for any $x\in S$,
$N(x)=N(S)$. Now, either $\rho(x)\in \mathbb{V}$ or $\rho (x)\in
\mathbb{T}$. Hence by parts 1 and 2 either $\rho(N(x))\subseteq
\mathbb{T}$ or $\rho(N(x))\subseteq \mathbb{V}$. Therefore either
$\rho(N(S))\subseteq \mathbb{T}$ or
$\rho(N(S))\subseteq \mathbb{V}$.\\
\indent \textbf{(4)} The proof is similar to that of the part 3.
\end{proof}

\begin{remark}\label{Nandrho} Let $\rho$  be a
 permutation on the vertex set $V$ of a graph $\digamma$.
  Then  $\rho$ is an automorphism
   of the graph $\digamma$,
  if and only if for every $A\subseteq V$, $\rho(N(A))=N(\rho(A))$.
  To the prove, let $\rho$ be an automorphism, then
  $x\in \rho(N(A))\,\Leftrightarrow\rho^{-1}(x)\in N(A)$
$\Leftrightarrow(\rho^{-1}(x)\sim
  a$, for some $a\in A$) $\Leftrightarrow( x\sim \rho(a)$, for some
   $a\in A)\Leftrightarrow x\in N(\rho(A))$. Conversely,
   the assumption implies, for any $x\in V$, $\rho(N(x))=N(\rho(x))$. So
    $a\sim x\Leftrightarrow a\in N(x)\,\Leftrightarrow \rho(a)\in \rho(N(x))$
$\Leftrightarrow \rho(a)\in N(\rho(x))\Leftrightarrow \rho(a)\sim
\rho(x)$.
   \end{remark}
\begin{lemma}\label{a8}
Let $\rho$ be an automorphism of the graph
$\digamma(\mathbb{V})$ and $u,v\in \mathbb{V}$. Then:\\
\indent \emph{\textbf{(1)}} If $\rho(u),\rho(v)\in \mathbb{V}$, then
$\rho(u)$ and $\rho(v)$ are linearly independent, if and only if $u$
and $v$
are linearly independent vectors  of $\mathbb{V}_0$. \\
\indent \emph{\textbf{(2)}} If $\rho(u),\rho(v)\in \mathbb{T}$, then
$\rho(u)$ and $\rho(v)$ are linearly independent, if and only if $u$
and $v$ are linearly independent vectors  of $\mathbb{V}_0$.
\end{lemma}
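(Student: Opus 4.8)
The plan is to reduce both statements to a single structural fact, namely that every automorphism of $\digamma(\mathbb{V})$ carries twin points to twin points, and then to convert twin-point information into statements about linear (in)dependence by means of Lemmas \ref{a5} and \ref{a6} together with the vector-space isomorphism $u\mapsto f_u$ between $\mathbb{V}_0$ and $\mathbb{T}_0$ recorded in the opening Remark. Once the twin-point dictionary is in place, each part becomes a short chain of equivalences that I then negate.

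First I would establish the auxiliary fact. Suppose $x,y$ are twin points, so $N(x)=N(y)$. By Remark \ref{Nandrho} we get $N(\rho(x))=\rho(N(x))=\rho(N(y))=N(\rho(y))$, so $\rho(x),\rho(y)$ are again twin points. Applying this to the automorphism $\rho^{-1}$ gives the reverse implication, so that $x,y$ are twin points if and only if $\rho(x),\rho(y)$ are twin points. This equivalence is the engine behind both parts.

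For part \textbf{(1)}, assume $\rho(u),\rho(v)\in\mathbb{V}$. Since $u,v$ and $\rho(u),\rho(v)$ all lie in $\mathbb{V}$, Lemma \ref{a5} applies on both sides, so I would run the chain: $u,v$ linearly dependent $\iff$ $u,v$ twin points $\iff$ $\rho(u),\rho(v)$ twin points $\iff$ $\rho(u),\rho(v)$ linearly dependent, where the middle step is the auxiliary fact and the two outer steps are Lemma \ref{a5}. Negating each side yields exactly the asserted equivalence for linear independence.

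For part \textbf{(2)}, write $\rho(u)=f_a$ and $\rho(v)=f_b$ with $a,b\in\mathbb{V}$ (nonzero, since $\rho(u),\rho(v)\in\mathbb{T}=\mathbb{T}_0\setminus\{0\}$). Here the extra ingredient is that $f_a,f_b$ are linearly dependent in $\mathbb{T}_0$ if and only if $a,b$ are linearly dependent in $\mathbb{V}_0$, because $u\mapsto f_u$ is a vector-space isomorphism. I would then assemble the chain: $f_a,f_b$ linearly dependent $\iff$ $a,b$ linearly dependent (isomorphism) $\iff$ $a,b$ twin points (Lemma \ref{a5}) $\iff$ $f_a,f_b$ twin points (Lemma \ref{a6}(2)) $\iff$ $u,v$ twin points (auxiliary fact) $\iff$ $u,v$ linearly dependent (Lemma \ref{a5}). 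Negating gives the equivalence for independence. The one step demanding genuine care — the main obstacle — is this correct passage in part \textbf{(2)} between independence of the functionals $f_a,f_b$ and independence of their defining vectors $a,b$, which is precisely where the isomorphism $\mathbb{T}_0\cong\mathbb{V}_0$ and Lemma \ref{a6}(2) must both be invoked; the remainder is routine bookkeeping with the earlier lemmas.
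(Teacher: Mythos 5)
Your proof is correct and takes essentially the same route as the paper's: both arguments reduce linear (in)dependence to the twin-point relation via Lemmas \ref{a5} and \ref{a6}, and both use Remark \ref{Nandrho} to show that an automorphism preserves twin points. The only differences are presentational --- you organize it as a chain of equivalences rather than a proof by contradiction, and you spell out part (2) (including the isomorphism $\mathbb{T}_0\cong\mathbb{V}_0$) where the paper merely says it is ``similar.''
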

\begin{proof}
(1) Let $u$ and $v$ are linearly independent. By Lemma \ref{a5} $u$
and $v$ are not twin points. To the contrary, assume $\rho(u)$ and
$\rho(v)$ are linearly dependent. Then by Lemma \ref{a6}
$N(\rho(u))=N(\rho(v))$. Then since $\rho$ is automorphism by Remark
\ref{Nandrho}, $\rho(N(x))=N(\rho(x))$ for every vertex $x$ of
$\digamma(\mathbb{V})$. So $\rho(N(u))= \rho(N(v))$ and hence
$N(u)=N(v)$, as $\rho$ is one to one correspondence.  Therefore $u$
and $v$ are twin points, a contradiction. So $\rho(u)$ and $\rho(v)$
are linearly independent. The converse is similar.\\
(2) The proof is similar as that of the part 1.
\end{proof}

\begin{lemma}\label{a9}
Let $\rho$ be an automorphism of the graph $\digamma(\mathbb{V})$.
Then for any $S\in \Sigma$, either $\rho (S)\in \Sigma$ or
$\rho(S)=\mathcal{F}_{S'}$  for some  $S'\in \Sigma$. In other words
$\rho$ acts as a permutation on the set $\Sigma \cup
\{\mathcal{F}_S|\, S\in \Sigma\}$.
\end{lemma}
\begin{proof} Notice that for any $S'\in \Sigma$, $\mathcal{F}_{S'}$
 is a 1-dimensional subspace without zero vector of
$\mathbb{T}_0$. Let $\{x, y\}\subseteq \mathbb{V}$ or $\{x,
y\}\subseteq \mathbb{T}$. Then the Corollary \ref{a8} implies,
$\rho(x)$ and $\rho(y)$ are linearly dependent if and only if $x$
and $y$ are linearly dependent vectors in $\mathbb{V}$ or in
$\mathbb{T}$. Note that two vertices of $\digamma(\mathbb{V})$ are
linearly dependent vectors if and only if they are elements of a
unique 1-dimensional subspace without zero say $S$ or
$\mathcal{F}_S$, for a $S\in \Sigma$. Therefore $\rho$ acts as a
permutation on the set $\Sigma \cup \{\mathcal{F}_S|\, S\in
\Sigma\}$, as $S$ and $\mathcal{F}_S$ are 1-dimensional subspaces
without zero vector of $\mathbb{V}_0$ and $\mathbb{T}_0$,
respectively, for any $S\in \Sigma$.
\end{proof}

Now, the following result is useful for characterizing the
automorphisms of the graph $\digamma(\mathbb{V})$ in the next
results.

\begin{theorem}\label{maintwo}
Let $\rho: V\rightarrow V$ be bijective map on the vertex set $V=
\mathbb{V}\cup \mathbb{T}$ of the graph $\digamma(\mathbb{V})$. Then
the following conditions are
equivalent:\\
\indent\emph{\textbf{(1)}} $\rho$ is an automorphism of the graph $\digamma(\mathbb{V})$.\\
\indent\emph{\textbf{(2)}} $\rho$ acts as a permutation on $\Sigma
\cup\{\mathcal{F}_S|\, S\in \Sigma\}$ and  $\rho(N(S))=N(\rho (S))$
for every $S\in \Sigma$.\\
\indent\emph{\textbf{(3)}} $\rho$ acts as a permutation on $\Sigma
\cup\{\mathcal{F}_S|\, S\in \Sigma\}$ and
$\rho(N(\mathcal{F}_S))=N(\rho (\mathcal{F}_S))$ for every $S\in
\Sigma$.
\end{theorem}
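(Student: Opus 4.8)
The plan is to establish the two implications out of $(1)$ immediately from results already in hand, and then to close the circle by proving $(2)\Rightarrow(1)$ and $(3)\Rightarrow(1)$ with one common argument that exploits the block structure of $\digamma(\mathbb{V})$ together with a finiteness trick.

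First I would dispose of $(1)\Rightarrow(2)$ and $(1)\Rightarrow(3)$. If $\rho$ is an automorphism, then Lemma~\ref{a9} already says $\rho$ permutes $\Sigma\cup\{\mathcal{F}_S\mid S\in\Sigma\}$, and Remark~\ref{Nandrho} gives $\rho(N(A))=N(\rho(A))$ for \emph{every} $A\subseteq V$. Specialising $A=S$ and $A=\mathcal{F}_S$ yields $\rho(N(S))=N(\rho(S))$ and $\rho(N(\mathcal{F}_S))=N(\rho(\mathcal{F}_S))$, which are exactly the extra requirements in $(2)$ and $(3)$.

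For $(2)\Rightarrow(1)$ the central fact is that every edge of $\digamma(\mathbb{V})$ lies in exactly one of the complete bipartite blocks $\langle S\cup N(S)\rangle\cong\mathcal{K}_{q-1,\,q^{n-1}-1}$ supplied by Lemma~\ref{a6}(5): an edge $v\sim f_u$ belongs to the block indexed by the unique $S\in\Sigma$ with $v\in S$, and then $f_u\in N(S)$ automatically. Assuming $(2)$, I first show $\rho$ sends edges to edges. Given $v\sim f_u$, choose this $S$; then $\rho(S)$ is again a $1$-dimensional subspace without zero (of $\mathbb{V}_0$ or of $\mathbb{T}_0$), and by hypothesis $\rho(N(S))=N(\rho(S))$. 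Since $\langle\rho(S)\cup N(\rho(S))\rangle$ is complete bipartite, and $\rho(v)\in\rho(S)$ while $\rho(f_u)\in N(\rho(S))$ sit in its two opposite parts, completeness forces $\rho(v)\sim\rho(f_u)$.

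The step I expect to be the main obstacle is the reverse half, namely that $\rho$ also reflects adjacency; the naive route through $\rho^{-1}$ is blocked because $(2)$ controls only $N(S)$ for $S\in\Sigma$, whereas $\rho^{-1}$ may carry some $S$ to an $\mathcal{F}$-type subspace, about which $(2)$ is silent. I would instead use finiteness: as a bijection of the finite vertex set, $\rho$ induces an injection on unordered vertex pairs, hence an injection of the (finite) edge set into itself, which is therefore onto the edge set; thus a non-edge cannot map to an edge, for otherwise that edge would have a non-edge as its unique preimage. Hence $\rho$ is an automorphism. Finally, $(3)\Rightarrow(1)$ runs verbatim after replacing the blocks $\langle S\cup N(S)\rangle$ by the blocks $\langle\mathcal{F}_S\cup N(\mathcal{F}_S)\rangle$, which are complete bipartite $\mathcal{K}_{q-1,\,q^{n-1}-1}$ by Lemma~\ref{a6}(3) together with the symmetry $f_u\sim v\Leftrightarrow f_v\sim u$, and indexing each edge $v\sim f_u$ by the unique $S\in\Sigma$ with $u\in S$.
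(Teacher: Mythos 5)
Your proposal is correct, and its skeleton matches the paper's: the implications out of (1) are obtained from Lemma~\ref{a9} and Remark~\ref{Nandrho} exactly as in the paper, and your edge-preservation step for $(2)\Rightarrow(1)$ (pick the unique $S\in\Sigma$ with $v\in S$, apply $\rho(N(S))=N(\rho(S))$, invoke completeness of the block $\langle\rho(S)\cup N(\rho(S))\rangle$) is precisely the forward half of the paper's argument. Where you genuinely diverge is in how the biconditional is closed. The obstacle you anticipate does not arise in the paper's route: the paper never passes to $\rho^{-1}$, but writes the whole computation as a chain of equivalences
\[
f_x\sim s \iff f_x\in N(S) \iff \rho(f_x)\in\rho(N(S))=N(\rho(S)) \iff \rho(f_x)\sim\rho(s),
\]
where the middle step reverses merely because $\rho$ is injective, and the last step reverses because $\rho(s)\in\rho(S)$ and the image block is complete bipartite; so adjacency is reflected for free. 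Your alternative finish --- edges go to edges, and an edge-preserving bijection of a finite graph is automatically an automorphism, since the induced injection of the finite edge set into itself is onto and hence no non-edge can land on an edge --- is a different argument, and in one respect a stronger one. The paper's chain quantifies only over cross pairs $(f_x,s)\in\mathbb{T}\times\mathbb{V}$; it never checks that a pair lying inside $\mathbb{V}$ (or inside $\mathbb{T}$) cannot be mapped onto an edge, which is a live concern because under hypothesis (2) alone $\rho$ may mix $\mathbb{V}$ and $\mathbb{T}$ (it permutes $\Sigma\cup\{\mathcal{F}_S\mid S\in\Sigma\}$, and for $n=2$ such mixing actually occurs, cf.\ Theorem~\ref{thn=2}). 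Your finiteness argument disposes of all non-edges at once, including these within-part pairs, so it quietly patches that small gap in the paper's write-up; its only cost is the appeal to finiteness, which is harmless here, whereas the paper's iff-chain is purely local and needs no counting. Your treatment of $(3)\Rightarrow(1)$, indexing each edge by the unique $S$ with $u\in S$ and using the symmetry $f_u\sim v\Leftrightarrow f_v\sim u$ to get completeness of the $\mathcal{F}$-blocks, is likewise correct and corresponds to the paper's remark that this equivalence is proved "similarly."
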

\begin{proof}
$\textbf{(1)}\Rightarrow \textbf{(2)}$ Let $\rho$ be an automorphism
of
 $\digamma(\mathbb{V})$, then by  Lemma \ref{a9}, $\rho$ acts as a permutation on the set $\Sigma
\cup\{\mathcal{F}_S|\, S\in \Sigma\}$. By Remark \ref{Nandrho}
$\rho(N(S))=N(\rho (S))$, for all $S\in \Sigma$.\par
 $\textbf{(2)}\Rightarrow \textbf{(1)}$ Assume $\rho$ satisfies the condition 2 and $x, s \in
\mathbb{V}$. Then there exists a unique  $S\in \Sigma$ such that
$s\in S$ and by Lemma \ref{a6}  $N(s)=N(S)$. So $f_x\sim s$ if and
only if $f_x\in N(s)=N(S)$, if and only if $\rho(f_x)\in
\rho(N(s))=\rho(N(S))=N(\rho(S))$, if and only if
$\rho(f_x)\sim\rho(s')$ for some $s'\in S$.  By assumption
$\rho(S)\in \Sigma\cup \{\mathcal{F}_S|\, S\in \Sigma\}$. So
$\rho(S)= H$ or $\rho(S)=\mathcal{F}_{H}$, for a unique $H\in
\Sigma$. Since $\rho(s')\in H$ or $\rho(s')\in\mathcal{F}_{H}$, by
Lemma \ref{a6} $N(\rho(s'))= N(H)$ or
$N(\rho(s'))=N(\mathcal{F}_{H})$, for some $H\in \Sigma$.  In both
cases $N(\rho(s'))=N(\rho(S))$. So $\rho(f_x)\sim\rho(s')$ for some
$s'\in S$, if and only if $\rho(f_x)\in N(\rho(S))$, if and only if
$\rho(f_x)\sim\rho(s)$, as $\rho(S)\in \Sigma \cup\{\mathcal{F}_S|\,
S\in \Sigma\}$ and hence $\langle\rho(S)\cup N(\rho(S))\rangle$ is a
complete bipartite graph. So $f_x\sim s$ if and only if
$\rho(f_x)\sim\rho(s)$.\par
 \indent$\textbf{(3)}\Leftrightarrow\textbf{(1)}$ The proof
of this equivalency is similar as that of $(2)\Leftrightarrow(1)$.
\end{proof}
The following Lemma is useful for the next.
\begin{lemma}\label{intersection} Let $\rho$ be an automorphism of the graph
   $\digamma (\mathbb{V})$.
   Then, for every $H\in \Sigma$,
   $\rho (\mathcal{F}_H)=\bigcap_{\mathcal{F}_H\subseteq
   N(S)}N(\rho(S))$.
 \end{lemma}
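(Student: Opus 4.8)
The plan is to reduce the claim to the case $\rho=\mathrm{id}$ and then transport the resulting identity along $\rho$. Concretely, I would first establish the intrinsic identity
\[
\mathcal{F}_H=\bigcap_{\mathcal{F}_H\subseteq N(S)}N(S),
\]
the intersection being taken over all $S\in\Sigma$ with $\mathcal{F}_H\subseteq N(S)$, and only afterwards let $\rho$ act.

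To prove the intrinsic identity, fix a nonzero $a\in H$, so that $H=\langle a\rangle\setminus\{0\}$ and $\mathcal{F}_H=\{f_u\mid 0\neq u\in\langle a\rangle\}$. For $S\in\Sigma$ with generator $b$, membership $f_u\in N(S)=N(b)$ means $f_u(b)=u^Tb=0$; hence $\mathcal{F}_H\subseteq N(S)$ holds if and only if $a^Tb=0$, that is, if and only if the line $S$ is contained in the hyperspace $\ker(f_a)$. The inclusion $\mathcal{F}_H\subseteq\bigcap N(S)$ is then immediate from the defining condition of the index set. For the reverse inclusion, suppose $f_u$ lies in $N(S)$ for every line $S\subseteq\ker(f_a)$; since these lines exhaust $\ker(f_a)\setminus\{0\}$, this forces $f_u(s)=0$ for all $s\in\ker(f_a)$, i.e. $\ker(f_a)\subseteq\ker(f_u)$. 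As both are hyperspaces they coincide, so $u$ and $a$ are linearly dependent by Lemma \ref{a5}, and therefore $f_u\in\mathcal{F}_H$.

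It then remains to push $\rho$ through. A bijection commutes with intersections, and since $\rho$ is an automorphism Remark \ref{Nandrho} gives $\rho(N(S))=N(\rho(S))$ for every $S\in\Sigma$; the index condition $\mathcal{F}_H\subseteq N(S)$ refers only to the original $H$ and $S$ and is therefore untouched by $\rho$. Combining these,
\[
\rho(\mathcal{F}_H)=\rho\Big(\bigcap_{\mathcal{F}_H\subseteq N(S)}N(S)\Big)=\bigcap_{\mathcal{F}_H\subseteq N(S)}\rho(N(S))=\bigcap_{\mathcal{F}_H\subseteq N(S)}N(\rho(S)),
\]
which is exactly the asserted formula. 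The step I expect to be the crux is the reverse inclusion in the intrinsic identity: one must check that intersecting the sets $N(S)$ over all lines $S$ inside $\ker(f_a)$ recovers precisely $\mathcal{F}_H$ and nothing larger. This is ultimately the nondegeneracy of the pairing $(u,s)\mapsto u^Ts$ (equivalently $(H^{\perp})^{\perp}=H$), repackaged through the hyperspace dimension count.
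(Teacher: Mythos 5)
Your proof is correct and is essentially the paper's own argument: both rest on the intrinsic identity $\mathcal{F}_H=\bigcap_{\mathcal{F}_H\subseteq N(S)}N(S)$ combined with the naturality $\rho(N(S))=N(\rho(S))$ from Remark \ref{Nandrho}. The only differences are presentational: the paper obtains the intrinsic identity by citing Hoffman--Kunze (a subspace is the intersection of the hyperspaces containing it) and checks the two inclusions by chasing elements with $\rho$ and $\rho^{-1}$, whereas you prove that identity directly via the double-annihilator argument ($\ker(f_a)\subseteq\ker(f_u)$ forces $u\in\langle a\rangle\setminus\{0\}$, by Lemma \ref{a5}) and then push $\rho$ through the intersection using the fact that injections commute with intersections.
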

\begin{proof}
If $\mathcal{F}_H\subseteq N(S)$ for a $S\in \Sigma$, then $\rho
(\mathcal{F}_H)\subseteq \rho(N(S))=N(\rho(S))$, as by Theorem
\ref{maintwo} $\rho(N(S))=N(\rho(S))$, for every $S\in \Sigma$.  So
$\rho (\mathcal{F}_H)\subseteq\bigcap_{\mathcal{F}_H\subseteq
N(S)}N(\rho(S))$. Now assume $x\in\bigcap_{\mathcal{F}_H\subseteq
N(S)}N(\rho(S))$. For every $S\in \Sigma$, if
$\mathcal{F}_H\subseteq N(S)$ then $x\in N(\rho(S))=\rho(N(S))$.
Therefore for every $S\in \Sigma$, if $\mathcal{F}_H\subseteq N(S)$
then $\rho^{-1}(x)\in N(S)$. Note that every hyperspace without zero
element of $\mathbb{T}_0$ is a form of $N(S)$, for a unique $S\in
\Sigma$. So by {\cite[Corollary after Theorem 16]{Hoffman}}
$\mathcal{F}_H$ is intersection of number of $(n-1)$ hyperspaces
without zero of $\mathbb{T}_0$ and every such a hyperspace without
zero element is a $N(S)$ containing $\mathcal{F}_H$, for a unique
$S\in \Sigma$. Therefore $\mathcal{F}_H=\bigcap
_{\mathcal{F}_H\subseteq N(S)} N(S)$. Thus $\rho^{-1}(x)\in
\mathcal{F}_H$. So $x\in \rho (\mathcal{F}_H)$. Hence $\rho
(\mathcal{F}_H)\supseteq\bigcap_{\mathcal{F}_H\subseteq
N(S)}N(\rho(S))$. So
$\rho(\mathcal{F}_H)=\bigcap_{\mathcal{F}_H\subseteq
N(S)}N(\rho(S))$.
\end{proof}
Here, first we start investigate the structure of automorphisms of
the graph $\digamma(\mathbb{V})$, for  $n$=2. Since we will see in
Remark \ref{n=2}, for $n=2$ the graph $\digamma(\mathbb{V})$ is not
connected graph. Also we will see the main theorems for the cases
$n=2$ and $n\geq 3$ are not similar.

\begin{remark} \label{n=2} Let $n=2$ and $\mathbb{V}_0=\mathbb{F}_q^2$.
Then $W$ is a hyperspace of $\mathbb{V}_0$ or $\mathbb{T}_0$ if and
only if $W$ is a 1-dimensional subspace of $\mathbb{V}_0$ or
$\mathbb{T}_0$, respectively. As we know for every $S\in \Sigma$,
$N(S)$ and $N(\mathcal{F}_S)$ are hyperspaces without zero elements
of $\mathbb{T}_0$ and $\mathbb{V}_0$, respectively.
 Hence $N(S)\in \{\mathcal{F}_H| H\in \Sigma\}$ and
$N(\mathcal{F}_S)\in \Sigma $, for all $S\in \Sigma$. By Remark
\ref{cardsigma}, $|\Sigma|=\frac{q^2-1}{q-1}=q+1$. Let
$\Sigma=\{S_1,S_2,\ldots,S_{q+1}\}$. By Lemma \ref{a6}, if $S_i\neq
S_j$ then $N(S_i)\cap N(S_j)=\emptyset$, as the intersection of two
distinct 1-dimensional subspaces is zero.
 Therefore $\digamma(\mathbb{V})$ is a disconnected graph with
$q+1$ connected components $\langle S_1\cup N(S_1)\rangle$,$\langle
S_2\cup N(S_2)\rangle$,$\ldots$,$\langle S_{q+1}\cup
N(S_{q+1}\rangle$. Clearly $\langle S_i\cup N(S_i)\rangle$ is a
complete bipartite graph isomorphic with $\mathcal{K}_{{q-1},{q-1}}$
for all $i=1,2,\ldots,q+1$. So every two components are isomorphic
graphs.
 \end{remark}
\begin{lemma}\label{Lemn=2} Assume $n=2$ and $S_1,S_2\in \Sigma$.
Let $\sigma:S_1\cup N(S_1)\rightarrow S_2\cup N(S_2)$ be a
 bijection. Then the following conditions hold:\\
 \indent \emph{\textbf{(1)}}  $\sigma$ is a graph isomorphism  from $\langle S_1\cup
 N(S_1)\rangle$ to $\langle S_2\cup
 N(S_2)\rangle$ if and only if  either $\sigma(S_1)=S_2$ or
$\sigma(S_1)=N(S_2)$.\\
 \indent \emph{\textbf{(2)}} There are exactly
 $2((q-1)!)^2$ graph isomorphisms from $\langle S_1\cup
 N(S_1)\rangle$ to $\langle S_2\cup
 N(S_2)\rangle$.
 \end{lemma}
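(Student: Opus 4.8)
The plan is to exploit the fact, recorded in Lemma~\ref{a6}(5), that for $n=2$ each of the induced subgraphs $\langle S_1\cup N(S_1)\rangle$ and $\langle S_2\cup N(S_2)\rangle$ is a copy of the complete bipartite graph $\mathcal{K}_{q-1,q-1}$, with color classes $S_i$ and $N(S_i)$ (each of size $q-1$). The single structural observation I need is that in such a graph the only independent sets of size $q-1$ are exactly the two color classes: any vertex set meeting both $S_2$ and $N(S_2)$ contains a pair of adjacent vertices by completeness, so an independent set lies wholly inside one class, and having size $q-1$ it must fill that class. I would establish this first, since both parts of the lemma rest on it.

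For part (1), the forward implication is immediate from this observation: if $\sigma$ is a graph isomorphism, then it carries the independent set $S_1$ (of size $q-1$) to an independent set of the same size in the target graph, which must be $S_2$ or $N(S_2)$; hence $\sigma(S_1)=S_2$ or $\sigma(S_1)=N(S_2)$. For the converse, suppose $\sigma(S_1)=S_2$; since $\sigma$ is a bijection of $S_1\cup N(S_1)$ onto $S_2\cup N(S_2)$ and these are disjoint unions of equal-sized classes, necessarily $\sigma(N(S_1))=N(S_2)$. Then for any $u\in S_1$ and $f\in N(S_1)$ we have $u\sim f$ (completeness of the source) together with $\sigma(u)\in S_2$ and $\sigma(f)\in N(S_2)$, so $\sigma(u)\sim\sigma(f)$ (completeness of the target), while two vertices of a common class stay non-adjacent on both sides; thus $\sigma$ preserves adjacency and non-adjacency and is an isomorphism. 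The case $\sigma(S_1)=N(S_2)$ (so $\sigma(N(S_1))=S_2$) is identical with the roles of the two classes swapped.

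For part (2), I would simply count the maps allowed by part (1). Every isomorphism falls into exactly one of two cases: \textbf{(A)} $\sigma(S_1)=S_2$, forcing $\sigma(N(S_1))=N(S_2)$; or \textbf{(B)} $\sigma(S_1)=N(S_2)$, forcing $\sigma(N(S_1))=S_2$. These are disjoint because $S_2\subseteq\mathbb{V}$ and $N(S_2)\subseteq\mathbb{T}$ are disjoint sets. Within case (A) the isomorphism is determined by an arbitrary bijection $S_1\to S_2$ together with an arbitrary bijection $N(S_1)\to N(S_2)$, and by part (1) every such pair genuinely yields an isomorphism; this gives $((q-1)!)^2$ maps, and case (B) contributes another $((q-1)!)^2$ by the same count. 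Adding the two disjoint cases yields $2((q-1)!)^2$. The argument is essentially routine once the independent-set characterization is in hand; the only points requiring a little care are verifying that fixing the image of one class forces the image of the other (so the two factors of $(q-1)!$ are independent and unconstrained) and checking that cases (A) and (B) do not overlap. The count is visibly valid at the boundary $q=2$, where each class is a single vertex and $2((q-1)!)^2=2$ recovers the two bijections of $\mathcal{K}_{1,1}$.
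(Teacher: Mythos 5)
Your proof is correct, and its overall architecture matches the paper's: first show that any isomorphism must carry the class $S_1$ onto one of the two classes $S_2$ or $N(S_2)$ (with the image of $N(S_1)$ then forced), then count $(q-1)!\cdot(q-1)!$ bijections in each of the two disjoint cases to get $2((q-1)!)^2$. The one genuine difference lies in how the key rigidity step (the forward direction of part (1)) is justified. The paper argues algebraically: it invokes the twin-point machinery, saying that, as in Lemma~\ref{a8}, $\sigma$ must send linearly dependent vectors to linearly dependent vectors, and then concludes from $|S_i|=|N(S_i)|=q-1$ that $\sigma(S_1)\in\{S_2,N(S_2)\}$. You instead argue purely combinatorially: in $\mathcal{K}_{q-1,q-1}$ the only independent sets of size $q-1$ are the two color classes, and an isomorphism carries the independent set $S_1$ to an independent set of the same size. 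Your route is more self-contained, since Lemma~\ref{a8} as stated concerns automorphisms of the whole graph $\digamma(\mathbb{V})$ rather than isomorphisms between induced subgraphs, so the paper's appeal to it is really an appeal to an analogous argument; your independent-set observation needs nothing beyond Lemma~\ref{a6}(5) and works verbatim for isomorphisms between any two components. What the paper's approach buys in exchange is consistency with the rest of the article, where linear dependence and twin points are the running theme, so the reader sees the same mechanism at work in both the $n=2$ and $n\geq 3$ cases. Your explicit verification that the two cases are disjoint, and the sanity check at $q=2$, are details the paper leaves implicit.
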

\begin{proof} \textbf{(1)} Clearly if $\sigma(S_1)=S_2$ (i.e. $\sigma(N(S_1))=N(S_2)$ ) or
 $\sigma(S_1)=N(S_2)$ (i.e. $\sigma(N(S_2))=S_1$ ),  then $\sigma$ is an
 isomorphism. To the converse, similar as the proof of Lemma \ref{a8},
 $\sigma$ must send every two linearly dependent vectors to two linearly dependent vectors.
 Since $|N(S_i)|=|S_i|=q-1$, we have $\sigma(S_1)=S_2$ or
 $\sigma(S_1)=N(S_2)$.\\
\indent \textbf{(2)} There are exactly the number of $(q-1)!(q-1)!$
bijections $\sigma:S_1\cup N(S_1) \rightarrow S_2\cup N(S_2)$  such
that $\sigma(S_1)=S_2$ and $\sigma(N(S_1))=N(S_2)$. Also there are
exactly the number of $(q-1)!(q-1)!$ bijections $\sigma:S_1\cup
N(S_1) \rightarrow S_2\cup N(S_2)$ such that $\sigma(S_1)=N(S_2)$
and $\sigma(N(S_1))=S_2$. So the number of graph isomorphisms such a
$\sigma:\langle S_1\cup
 N(S_1)\rangle \rightarrow\langle S_2\cup
 N(S_2)\rangle$ is $2((q-1)!)^2$.
\end{proof}
In the following result the structure of automorphisms of
$\digamma(\mathbb{V})$, where $\mathbb{V}_0=\mathbb{F}_q^2$ is
characterized.
 \begin{theorem}\label{thn=2}Let $n=2$. Then, a permutation $\rho$ of the set
 $V=\mathbb{V}\cup \mathbb{T}$ is an automorphism of the graph
 $\digamma(\mathbb{V})$, if and only if two following conditions
 hold:\\
\indent \emph{\textbf{(1)}} $\rho$ acts as a permutation on the set
of components\emph{(}i,e. for every $S_i\in \Sigma$  there exists a
unique $S_j\in \Sigma$ such that $\rho (S_i\cup N(S_i))= S_j\cup
N(S_j)$.\\
\indent \emph{\textbf{(2)}} For $S_i,S_j\in \Sigma$, if $\rho
(S_i\cup N(S_i))= S_j\cup N(S_j)$ then $\rho (S_i)\in \{S_j,
N(S_j)\}$.
 \end{theorem}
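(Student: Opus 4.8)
The plan is to exploit the fact, established in Remark \ref{n=2}, that for $n=2$ the graph $\digamma(\mathbb{V})$ decomposes into exactly $q+1$ pairwise disjoint connected components $\langle S_i\cup N(S_i)\rangle$, $i=1,\dots,q+1$, each isomorphic to $\mathcal{K}_{q-1,q-1}$, with no edges joining distinct components. Thus adjacency in $\digamma(\mathbb{V})$ is entirely internal to the components, and the whole question reduces to understanding bijections that respect this block structure. Lemma \ref{Lemn=2} already classifies the isomorphisms between two components, so the real work is to link ``being a global automorphism'' with ``permuting the components and restricting to component isomorphisms''.

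For the forward direction, suppose $\rho$ is an automorphism. First I would invoke the standard fact that a graph automorphism carries maximal connected subgraphs to maximal connected subgraphs: since adjacency is preserved in both directions, the image of the connected subgraph $S_i\cup N(S_i)$ is connected, and by maximality it is again a full component $S_j\cup N(S_j)$ for some $j$. Because $\rho$ is a bijection and the components partition $V$, distinct components have disjoint images, so $\rho$ induces a permutation of the $q+1$ components; this is condition \textbf{(1)}. For condition \textbf{(2)}, once $\rho(S_i\cup N(S_i))=S_j\cup N(S_j)$ the restriction of $\rho$ to this component is a bijection that is simultaneously a graph isomorphism onto $\langle S_j\cup N(S_j)\rangle$, so Lemma \ref{Lemn=2} part (1) forces $\rho(S_i)\in\{S_j,N(S_j)\}$.

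For the converse, assume \textbf{(1)} and \textbf{(2)} hold and fix any $i$, with $\rho(S_i\cup N(S_i))=S_j\cup N(S_j)$ and $\rho(S_i)\in\{S_j,N(S_j)\}$. The restriction $\rho|_{S_i\cup N(S_i)}$ is then a bijection onto $S_j\cup N(S_j)$ satisfying the hypothesis of Lemma \ref{Lemn=2} part (1), hence a graph isomorphism of the corresponding induced subgraphs. It remains to glue these component isomorphisms into a single automorphism: given $x,y\in V$, if they lie in the same component then adjacency is preserved by the per-component isomorphism, whereas if they lie in different components they are non-adjacent and, since $\rho$ sends distinct components to distinct components, their images lie in different components and are again non-adjacent. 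Hence $x\sim y$ if and only if $\rho(x)\sim\rho(y)$ for every pair, so $\rho$ is an automorphism. I expect the only delicate point to be the clean justification that no edges cross between components---exactly the content of Remark \ref{n=2}, where $N(S_i)\cap N(S_j)=\emptyset$ for $i\neq j$---so that the global adjacency check genuinely reduces to the internal one.
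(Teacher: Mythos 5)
Your proposal is correct and takes essentially the same route as the paper's proof: the forward direction uses the fact that an automorphism permutes the connected components identified in Remark \ref{n=2} and then applies Lemma \ref{Lemn=2} to the restriction, exactly as the paper does, while your converse (gluing the per-component isomorphisms and checking that cross-component pairs stay non-adjacent) is just a fleshed-out version of what the paper dismisses as ``clear'' from each component being $\mathcal{K}_{q-1,q-1}$. No gaps; if anything, your write-up of the converse is more complete than the published one.
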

 \begin{proof} Let $\rho$ be an automorphism, then using Remark \ref{n=2}, clearly $\rho$
 sends every connected components to a connected component. So
 $\rho$ acts as a permutation on the set $\{S_i\cup N(S_i)| S_i\in
 \Sigma\}$. Hence the condition 1 holds. Assume $\rho(S_i\cup
 N(S_i))=S_j\cup N(S_j)$.  Then $\rho$ acts necessarily as a graph isomorphism between
 $\langle S_i\cup N(S_i)\rangle$ and $\langle S_j\cup
 N(S_j)\rangle$. So by Lemma \ref{Lemn=2}, we have $\rho(S_i)=S_j$ or
$\rho(S_i)=N(S_j)$ and hence the condition 2 holds. The converse is
clear as each component of $\digamma(\mathbb{V})$ is isomorphic to
the complete bipartite graph $\mathcal{K}_{q-1,q-1}$ and the proof
is complete.
 \end{proof}
 By Theorem \ref{thn=2}, for $n=2$ it is possible to have an
 automorphism $\rho$ of $\digamma(\mathbb{V})$ such that
 $\rho(\mathbb{V})\notin \{\mathbb{V}, \mathbb{T}\}$. But for $n\geq
 3$ the following holds.
\begin{corollary}\label{V or T} Let $\rho$ be an automorphism of
$\digamma (\mathbb{V})$ and $n\geq3$.
   Then, either $\rho (\mathbb{V})=\mathbb{V}$ or $\rho
   (\mathbb{V})=\mathbb{T}$\emph{(}i.e. either $\rho (\mathbb{T})=\mathbb{T}$ or $\rho
   (\mathbb{T})=\mathbb{V}$\emph{)}.
\end{corollary}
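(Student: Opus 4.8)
The plan is to exploit the connectivity of $\digamma(\mathbb{V})$ for $n\geq 3$ (Remark \ref{connactivity}) together with the bipartite structure, and to argue by propagation along paths. The starting observation is that, since the bipartition is $V=\mathbb{V}\cup\mathbb{T}$, any single vertex $x$ satisfies exactly one of $\rho(x)\in\mathbb{V}$ or $\rho(x)\in\mathbb{T}$; moreover $N(\rho(x))\subseteq\mathbb{T}$ when $\rho(x)\in\mathbb{V}$, while $N(\rho(x))\subseteq\mathbb{V}$ when $\rho(x)\in\mathbb{T}$, simply because neighbors always lie in the opposite part. Combined with $\rho(N(x))=N(\rho(x))$ from Remark \ref{Nandrho}, this yields the key local rule: $\rho(x)\in\mathbb{V}$ forces $\rho(N(x))\subseteq\mathbb{T}$, and $\rho(x)\in\mathbb{T}$ forces $\rho(N(x))\subseteq\mathbb{V}$.

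First I would fix any base vertex $v\in\mathbb{V}$ and split into two cases according to whether $\rho(v)\in\mathbb{V}$ or $\rho(v)\in\mathbb{T}$; these are exhaustive and mutually exclusive. In the first case I claim $\rho$ preserves both parts, i.e. $\rho(\mathbb{V})\subseteq\mathbb{V}$ and $\rho(\mathbb{T})\subseteq\mathbb{T}$, and I would prove this by induction on the distance $d(v,x)$ in the graph. The base case is $\rho(v)\in\mathbb{V}$. For the inductive step, take $y$ adjacent to a vertex $x$ with $d(v,x)=d(v,y)-1$; since the graph is bipartite, $x$ and $y$ lie in opposite parts. If $x\in\mathbb{V}$ (so $\rho(x)\in\mathbb{V}$ by induction) then the local rule gives $\rho(y)\in\rho(N(x))=N(\rho(x))\subseteq\mathbb{T}$, matching $y\in\mathbb{T}$; symmetrically, if $x\in\mathbb{T}$ then $\rho(y)\in\mathbb{V}$, matching $y\in\mathbb{V}$. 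Because $\digamma(\mathbb{V})$ is connected for $n\geq 3$, every vertex is reachable from $v$, so the induction covers all of $V$ and delivers $\rho(\mathbb{V})\subseteq\mathbb{V}$ and $\rho(\mathbb{T})\subseteq\mathbb{T}$.

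The second case, $\rho(v)\in\mathbb{T}$, is handled by the identical induction with the two parts interchanged, giving $\rho(\mathbb{V})\subseteq\mathbb{T}$ and $\rho(\mathbb{T})\subseteq\mathbb{V}$. In either case I would finish by invoking finiteness: $\rho$ is a bijection of $V$ and $|\mathbb{V}|=|\mathbb{T}|$ (both equal $q^n-1$), so an injection of a finite set into itself is onto and the inclusions upgrade to equalities, yielding $\rho(\mathbb{V})=\mathbb{V}$ or $\rho(\mathbb{V})=\mathbb{T}$ as required.

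The main obstacle — and the reason the hypothesis $n\geq 3$ is essential — is precisely the connectivity step: the propagation only reaches vertices in the connected component of $v$. For $n=2$, where $\digamma(\mathbb{V})$ splits into $q+1$ complete-bipartite components (Remark \ref{n=2}), one may independently decide in each component whether to preserve or swap its two sides, which is exactly why Theorem \ref{thn=2} permits automorphisms with $\rho(\mathbb{V})\notin\{\mathbb{V},\mathbb{T}\}$. Once the local rule and connectivity are in place, everything else is routine bookkeeping.
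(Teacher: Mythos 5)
Your proof is correct, but it takes a genuinely different route from the paper's. The paper argues by contradiction through its linear-algebraic machinery: by Lemma \ref{4partition} each punctured line $S\in\Sigma$ satisfies $\rho(S)\subseteq\mathbb{V}$ or $\rho(S)\subseteq\mathbb{T}$; if two lines $S_1,S_2$ went to opposite sides, the paper picks (using $n\geq3$) a line $\mathcal{F}_H\subseteq N(S_1)\cap N(S_2)$ and applies Lemma \ref{intersection} to get $\rho(\mathcal{F}_H)\subseteq N(\rho(S_1))\cap N(\rho(S_2))\subseteq\mathbb{T}\cap\mathbb{V}=\emptyset$, a contradiction. You instead prove, by induction on distance from a base vertex, the classical fact that an automorphism of a \emph{connected} bipartite graph either preserves or swaps the two color classes, using only Remark \ref{Nandrho} (commutation of $\rho$ with $N$) and Remark \ref{connactivity} (connectivity for $n\geq3$); both are available in the paper before the corollary, so your dependencies are legitimate. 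Your argument is more elementary and more general -- it never uses the vector-space structure beyond connectivity, whereas the paper's hyperplane-intersection machinery (Lemma \ref{intersection} in particular) is developed because it is reused in Theorem \ref{mainth2} and the later counting results, so the two approaches serve different economies. Both proofs correctly locate the role of $n\geq3$ in the same geometric fact (kernels of two functionals meet nontrivially), and you rightly tie the failure for $n=2$ to the component structure of Remark \ref{n=2} and Theorem \ref{thn=2}. One small simplification available to you: the final finiteness step is unnecessary -- since $\rho$ is a bijection of $V=\mathbb{V}\cup\mathbb{T}$ and the two parts are disjoint, the inclusions $\rho(\mathbb{V})\subseteq\mathbb{V}$, $\rho(\mathbb{T})\subseteq\mathbb{T}$ (or the swapped pair) already force equality by surjectivity alone, without appealing to $|\mathbb{V}|=|\mathbb{T}|=q^n-1$.
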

\begin{proof}
Note that by Lemma \ref{4partition}, for any $S\in\Sigma$  either
$\rho(S)\subseteq \mathbb{V}$ or $\rho(S)\subseteq \mathbb{T}$. To
the contrary assume $S_1\neq S_2$ are two elements of $\Sigma$ such
that $\rho(S_1)\subseteq \mathbb{V}$ and
$\rho(S_2)\subseteq\mathbb{T}$. Then consider $\mathcal{F}_H$, for a
$H\in \Sigma$, such that $\mathcal{F}_H\subseteq N(S_1)\cap
N(S_2)$,(Note that $n\geq 3$ and hence $N(S_1)\cap N(S_2)\neq
\emptyset$
 and hence there exists such a $\mathcal{F}_H$).
Then by Lemma \ref{intersection},
$\rho(\mathcal{F}_H)=\bigcap_{\mathcal{F}_H\subseteq
N(S)}N(\rho(S))\subseteq N(\rho(S_1))\cap N(\rho(S_2))=\emptyset$,
as $N(\rho(S_1))\subseteq \mathbb{T}$ and $N(\rho(S_2))\subseteq
\mathbb{V}$, a contradiction. So either $\rho(S)\subseteq
\mathbb{V}$ for all $S\in \Sigma$, or $\rho(S)\subseteq \mathbb{T}$
for all $S\in \Sigma$. This implies either $\rho
(\mathbb{V})=\mathbb{V}$ or $\rho
   (\mathbb{V})=\mathbb{T}$.
\end{proof}
In the  following result for $n\geq 3$, the structure of
automorphisms of the graph $\digamma(\mathbb{V})$ is characterized.
\begin{theorem}\label{mainth2} For the integer $n\geq 3$
 the permutation $\rho:V\rightarrow V$ is an automorphism of
$\digamma(\mathbb{V})$, if and only if exactly one of the following
conditions holds:\\
\indent\emph{\textbf{(1)}} $\rho$ acts as a permutation on $\Sigma$
and $\rho
(\mathcal{F}_H)$$=$$\bigcap$$_{_{\mathcal{F}_H\subseteq N(S)}}$$N(\rho(S))$, for all $H\in \Sigma$.\\
\indent \emph{\textbf{(2)}} $\rho$ acts as a bijection
$\Sigma\rightarrow \{\mathcal{F}_H|\, H\in \Sigma\}$ and $\rho
(\mathcal{F}_H)=\bigcap_{\mathcal{F}_H\subseteq N(S)}N(\rho(S))$,
for all $H\in \Sigma$.
\end{theorem}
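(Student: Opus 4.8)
The plan is to run both implications through the dichotomy supplied by Corollary \ref{V or T}: for an automorphism $\rho$ we have either $\rho(\mathbb{V})=\mathbb{V}$ or $\rho(\mathbb{V})=\mathbb{T}$, and these will correspond to conditions (1) and (2). Since $\mathbb{V}$ and $\mathbb{T}$ are disjoint, the same dichotomy gives the ``exactly one'' clause for free. For the forward implication, assume $\rho$ is an automorphism and take any $S\in\Sigma$. By Lemma \ref{a9}, $\rho(S)\in\Sigma$ or $\rho(S)=\mathcal{F}_{S'}$. If $\rho(\mathbb{V})=\mathbb{V}$ then $\rho(S)\subseteq\mathbb{V}$ forces $\rho(S)\in\Sigma$ for every $S$, so $\rho$ permutes $\Sigma$; if $\rho(\mathbb{V})=\mathbb{T}$ then $\rho(S)\subseteq\mathbb{T}$ forces $\rho(S)=\mathcal{F}_{S'}$, so $\rho$ restricts to a bijection $\Sigma\to\{\mathcal{F}_H\mid H\in\Sigma\}$. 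In both cases the displayed identity $\rho(\mathcal{F}_H)=\bigcap_{\mathcal{F}_H\subseteq N(S)}N(\rho(S))$ is precisely Lemma \ref{intersection}, so condition (1) or (2) holds accordingly.

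For the converse under condition (1), $\rho$ permutes $\Sigma$ and satisfies the formula; the goal is to verify $\rho(N(H))=N(\rho(H))$ for every $H\in\Sigma$ and then quote Lemma \ref{a7}. Writing the punctured hyperspace $N(H)$ as the union $\bigcup_{\mathcal{F}_K\subseteq N(H)}\mathcal{F}_K$ of the one-dimensional subspaces it contains, the formula yields $\rho(\mathcal{F}_K)\subseteq N(\rho(H))$ for each such $K$, because $H$ is among the indices $S$ with $\mathcal{F}_K\subseteq N(S)$; hence $\rho(N(H))\subseteq N(\rho(H))$. As $\rho$ is injective and, by Lemma \ref{regular} together with Lemma \ref{a6}, both sets have exactly $q^{n-1}-1$ elements, the inclusion is an equality. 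Since $\rho(H)\in\Sigma$, the pair $(\rho(H),\rho(N(H)))=(\rho(H),N(\rho(H)))$ satisfies the hypothesis of Lemma \ref{a7}, and $\rho$ is an automorphism. The comfortable feature here is that Lemma \ref{a7} absorbs all the bookkeeping on the functional side, so I never have to check independently that $\rho$ sends functional-lines to functional-lines.

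For the converse under condition (2), I would reduce to the case just settled by means of the duality map $\delta\colon V\to V$, $\delta(v)=f_v$ and $\delta(f_u)=u$. Since $f_u(v)=u^{T}v=v^{T}u=f_v(u)$, the map $\delta$ preserves adjacency, so it is an automorphism with $\delta^{2}=\mathrm{id}$, and it satisfies $\delta(S)=\mathcal{F}_S$, $\delta(\mathcal{F}_H)=H$. Putting $\psi=\delta\circ\rho$, the hypothesis $\rho(\Sigma)=\{\mathcal{F}_H\mid H\in\Sigma\}$ makes $\psi$ a permutation of $\Sigma$; applying $\delta$ to the formula for $\rho$, using injectivity of $\delta$ to pass it through the intersection and Remark \ref{Nandrho} to rewrite $\delta(N(\rho(S)))=N(\psi(S))$, shows that $\psi$ satisfies the formula of condition (1). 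Hence $\psi$ is an automorphism by the previous paragraph, and therefore so is $\rho=\delta\circ\psi$.

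The main obstacle is exactly the converse direction: the formula only locates $\rho(\mathcal{F}_H)$ as an intersection of neighbourhoods, and the naive route is to argue directly that this intersection is again a one-dimensional subspace, which would import fundamental-theorem-of-projective-geometry reasoning about $\rho|_{\Sigma}$ preserving hyperplanes. The device that sidesteps this is the soft inclusion-plus-counting argument giving $\rho(N(H))=N(\rho(H))$, after which Lemma \ref{a7} (for condition (1)) and the duality automorphism $\delta$ (reducing condition (2) to condition (1)) close the proof without any finer analysis of the intersection.
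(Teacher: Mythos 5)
Your proposal is correct, and although it shares the paper's forward direction (Corollary \ref{V or T} plus Lemma \ref{a9} plus Lemma \ref{intersection}) and the same inclusion-plus-cardinality trick, the converse is organized in a genuinely different way. The paper routes condition (1) through Theorem \ref{maintwo}: it must first prove that $\bigcap_{\mathcal{F}_H\subseteq N(S)}N(\rho(S))$ is itself a one-dimensional subspace without zero, and that $H\mapsto H'$ is injective, invoking the corollary in \cite{Hoffman} that a line is an intersection of $n-1$ hyperspaces; only afterwards does it establish $\rho(N(S))=N(\rho(S))$ (by the same inclusion-and-counting argument you use) and conclude via Theorem \ref{maintwo}. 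Your route through Lemma \ref{a7} skips that projective-geometry step entirely: decomposing $N(H)$ into the lines $\mathcal{F}_K$ it contains, applying the hypothesis to each such line, and counting via Lemmas \ref{regular} and \ref{a6} gives $\rho(N(H))=N(\rho(H))$, which is all Lemma \ref{a7} requires --- you never need to know in advance that $\rho(\mathcal{F}_H)$ is a line, since that falls out a posteriori from Lemma \ref{a9}. This is a real economy, because the hyperspace-intersection step is the most delicate point of the paper's proof: a line in $\mathbb{T}_0$ lies in $\frac{q^{n-1}-1}{q-1}$ hyperspaces rather than $n-1$, so the paper's phrase ``intersection of exactly $(n-1)$ distinct hyperspaces'' needs careful reading, and your argument simply does not touch it. The second genuine difference is condition (2): the paper dismisses it as ``a similar proof,'' whereas you reduce it to condition (1) by composing with the involutive duality automorphism $\delta$ (the map the paper only introduces as $\sigma$ inside Theorem \ref{n=3theorem}), passing $\delta$ through the intersection by injectivity and using Remark \ref{Nandrho} to rewrite $\delta(N(\rho(S)))=N(\psi(S))$; this makes the second case rigorous by reduction instead of by repetition. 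What the paper's longer route buys in exchange is the explicit intermediate fact that $\rho$ permutes $\{\mathcal{F}_H\mid H\in\Sigma\}$, i.e.\ the hypotheses of Theorem \ref{maintwo}, which the later counting arguments (Theorem \ref{permutSNS}) lean on; your proof recovers that fact only after the automorphism property is in hand.
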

\begin{proof}
Let $\rho$ be an automorphism, then by Corollary \ref{V or T},
either $\rho(\mathbb{V})=\mathbb{V}$ or
$\rho(\mathbb{V})=\mathbb{T}$. By Theorem \ref{maintwo} $\rho$ acts
as a permutation on $\Sigma\cup\{\mathcal{F}_H|H\in\Sigma\}$, so
clearly at most one of the two conditions holds. So if
$\rho(\mathbb{V})=\mathbb{V}$, then by Lemma \ref{intersection},
$\rho (\mathcal{F}_H)=\bigcap_{\mathcal{F}_H\subseteq
N(S)}N(\rho(S))$, for all $H\in \Sigma$ and hence the condition 1
holds. If $\rho(\mathbb{V})=\mathbb{T}$, then  by Theorem
\ref{maintwo} $\rho$ acts as a one to one correspondence
$\Sigma\rightarrow \{\mathcal{F}_H|H\in\Sigma\}$ and hence the
condition (2) holds. To the converse, assume the condition (1)
holds. We have to show that $\rho$ is an automorphism of the graph
$\digamma(\mathbb{V})$. First note that $\mathcal{F}_H$ is a
1-dimensional subspace without zero element of $\mathbb{T}_0$ and
$N(S)$ is a hyperspace without zero of $\mathbb{T}_0$, for every $S,
H\in \Sigma$. For a fixed $H\in \Sigma$ one can see that for every
$S\in\Sigma$ either $\mathcal{F}_H \subseteq N(S)$ or $\mathcal{F}_H
\cap N(S)=\emptyset$.
 On the other hand for any $S\in \Sigma$ since $\rho(S)\in
\Sigma$, we conclude that $N(\rho(S))$ is also hyperspace without
zero of $\mathbb{T}_0$. If $S_1\neq S_2$ then by Lemma \ref{a6}
$N(S_1)$$\neq$$N(S_2)$ and also since  $\rho(S_1)$$\neq$$\rho(S_2)$
we have $N(\rho(S_1))$$\neq$$N(\rho(S_2))$ two distinct hyperspaces
with out zero of $\mathbb{T}_0$.
 So by {\cite[Corollary
after Theorem 16]{Hoffman}} the intersection
$\bigcap_{\mathcal{F}_H\subseteq N(S)}N(\rho(S))$ is the
intersection of exactly ($n$-1) numbers of distinct hyperspaces
without zero of $\mathbb{T}_0$, as  $\mathcal{F}_H$ is an
intersection of exactly ($n$-1) numbers of distinct hyperspaces with
out zero of the form $N(S)$, for some $S\in \Sigma$. Hence
$\bigcap_{\mathcal{F}_H\subseteq N(S)}N(\rho(S))$ must be a
1-dimensional subspace without zero of $\mathbb{T}_0$ say
$\mathcal{F}_{H'}$, for a $H'\in \Sigma$. Therefore
$\rho(\mathcal{F}_H)$$=$$\mathcal{F}_{H'}$$\in$$\{\mathcal{F}_H|\,
H\in\Sigma\}$. Now, we claim that $\rho$ acts as a permutation on
$\{\mathcal{F}_H|\, H\in\Sigma\}$. To the claim assume $H$$\neq$$H'$
are two arbitrary elements of $\Sigma$. Then there exist $S\neq
S'\in \Sigma$ such that $\mathcal{F}_H\cap N(S')$$=$$\emptyset$,
$\mathcal{F}_{H'}\cap N(S)$$=$$\emptyset$, $\mathcal{F}_H\subseteq
N(S)$, $\mathcal{F}_{H'}\subseteq N(S')$. So
$\rho(\mathcal{F}_{H'})$$\nsubseteq$$N(\rho(S))$ and
$\rho(\mathcal{F}_{H})\nsubseteq N(\rho(S'))$. But
$\rho(\mathcal{F}_{H})\subseteq N(\rho(S))$ and
$\rho(\mathcal{F}_{H'})\subseteq N(\rho(S'))$. So
$\bigcap_{\mathcal{F}_H\subseteq N(S)}N(\rho(S))\neq
\bigcap_{\mathcal{F}_{H'}\subseteq N(S)}N(\rho(S))$ and hence
$\rho(\mathcal{F}_H)$$\neq$$\rho(\mathcal{F}_{H'})$ and  the claim
is proven. By Theorem \ref{maintwo} it is enough to show that
  $\rho(N(S))= N(\rho(S))$ for all $S\in \Sigma$. Let $f_x\in
  \rho(N(S))$ and so $f_x=\rho(f_{x'})$ for a unique $f_{x'}\in N(S)$.
Then there exists a unique $H\in\Sigma$ such that
$f_{x'}\in\mathcal{F}_H $. Note that $\mathcal{F}_H\subseteq N(S)$,
as $\mathcal{F}_H$ is a 1-dimensional subspace without zero in
$N(S)$, generated by $f_{x'}$. Then the condition (1) implies
$f_x=\rho(f_{x'})\in \rho
(\mathcal{F}_H)=\bigcap_{\mathcal{F}_H\subseteq N(S)}N(\rho(S))$.
Therefore  $f_x\in N(\rho(S))$. So $\rho(N(S))\subseteq N(\rho(S))$.
One can see that $|\rho(N(S))|$$=$$|N(\rho(S))|$ and so
$\rho(N(S))$$=$$N(\rho(S))$. Thus $\rho$ is an automorphism of the
graph $\digamma(\mathbb{V})$.\par The equivalency related to the
condition (2) has a similar proof as that of the condition (1) and
the proof is complete.
\end{proof}
\section{\bf Cardinal number of the automorphism group of $\digamma(\mathbb{V})$}
It is well-known in the group theory, the cardinal number of any
finite group is very important  parameter for studying a group and
its subgroups. In this section the aim is determining the cardinal
number of the automorphism group for the graph
$\digamma(\mathbb{V})$.
\begin{theorem}\label{card2} For $n=2$, the graph $\digamma(\mathbb{V})$ has
exactly $(q+1)!(2((q-1)!)^2)^{q+1}$ automorphisms.
\end{theorem}
\begin{proof} By Remark \ref{n=2} and Theorem \ref{thn=2}, the set of
automorphisms of $\digamma(\mathbb{V})$ are exactly the set of
extensions of all permutations on the set of components $\{\langle
S_1\cup N(S_1)\rangle$,$\langle S_2\cup
N(S_2)\rangle$,$\ldots$,$\langle S_{q+1}\cup N(S_{q+1}\rangle\}$ to
the automorphisms of $\digamma(\mathbb{V})$. The set of components
has $(q+1)!$ permutations. Consider a fixed permutation on the set
of components. Then by Lemma \ref{Lemn=2} this permutation makes the
number of $2((q-1)!)^2$ isomorphisms from a component to its image
under this permutation. Since there are $q+1$ components,  using
multiple principle we have  $(2((q-1)!)^2)^{q+1}$ automorphisms of
$\digamma(\mathbb{V})$, for  this permutation. So in total there are
$(q+1)!(2((q-1)!)^2)^{q+1}$ automorphisms  of $\digamma(\mathbb{V})$
for all of permutations.
\end{proof}

\begin{remark} \label{a10} Assume $n\geq 3$ and $\rho$ is an automorphism
of the graph $\digamma(\mathbb{V})$. Then, by Corollary \ref{V or
T}, either $\rho(\mathbb{V})=\mathbb{V}$( and hence by Theorem
\ref{mainth2}, $\rho$ act as a permutation on $\Sigma$) or
$\rho(\mathbb{V})=\mathbb{T}$(and hence by Theorem \ref{mainth2},
$\rho$ acts as a bijection $\Sigma\rightarrow\{\mathcal{F}_H|\,H\in
\Sigma\}$). Then by Theorem \ref{mainth2} for every $H\in \Sigma$,
$\rho(\mathcal{F}_H)$ is obtained uniquely by the intersection
$\bigcap_{\mathcal{F}_H\subseteq N(S)}N(\rho(S))$. By Theorem
\ref{mainth2} the set of automorphisms of $\digamma(\mathbb{V})$ are
exactly the set of extensions of all permutations satisfying either
the condition (1) or (2) in Theorem \ref{mainth2} to the
automorphisms of $\digamma(\mathbb{V})$. Note that every such a
permutation extends (often not uniquely) to an automorphism of
$\digamma(\mathbb{V})$ and it is possible to count all such
extensions and hence all automorphisms of $\digamma(\mathbb{V})$ as
we will see in the following result.
 \end{remark}
 \begin{theorem}\label{permutSNS}Let $n\geq 3$ be a positive integer,
  then the following conditions hold:\\
  \indent \emph{\textbf{(1)}} Let $\mathcal{P}=\{\rho|\,\,
 \rho$ is an automorphism of the graph $\digamma(\mathbb{V}) $ such that $ \rho(\mathbb{V})=\mathbb{V}\}$.
 Then $|\mathcal{\mathcal{P}}|=
 (\frac{q^n-1}{q-1})!((q-1)!)^{2\frac{q^n-1}{q-1}}$.\\
\indent \emph{\textbf{(2)}} Let $\mathcal{T}=\{\rho|\,\,
 \rho$ is an automorphism of the graph $\digamma(\mathbb{V}) $
 such that $ \rho(\mathbb{V})=\mathbb{T}\}$.
 Then $|\mathcal{\mathcal{T}}|=
 (\frac{q^n-1}{q-1})!((q-1)!)^{2\frac{q^n-1}{q-1}}$.\\
 \indent \emph{\textbf{(3)}} $\digamma(\mathbb{V})$ has exactly
  $2(\frac{q^n-1}{q-1})!((q-1)!)^{2\frac{q^n-1}{q-1}}$ graph
  automorphisms.
 \end{theorem}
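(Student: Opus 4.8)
The plan is to use Corollary \ref{V or T} to split the automorphism group into the two disjoint pieces $\mathcal{P}$ and $\mathcal{T}$ and to count each separately. Part (3) will then follow immediately: since $\mathbb{V}\cap\mathbb{T}=\emptyset$, the conditions $\rho(\mathbb{V})=\mathbb{V}$ and $\rho(\mathbb{V})=\mathbb{T}$ are mutually exclusive, and by Corollary \ref{V or T} every automorphism satisfies exactly one of them, so the full automorphism group is the disjoint union $\mathcal{P}\sqcup\mathcal{T}$ and has $|\mathcal{P}|+|\mathcal{T}|$ elements. For part (1) the strategy is to exhibit a bijection between $\mathcal{P}$ and the set of data consisting of a permutation of $\Sigma$ together with, for each twin-point class, a choice of how it is permuted internally; the substance of the proof is to show that these data are independent, that any choice of them yields an automorphism, and that they recover $\rho$ uniquely.

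More precisely, I would first fix $\rho\in\mathcal{P}$. By Theorem \ref{mainth2}(1), $\rho$ restricts to a permutation $\pi$ of $\Sigma$, and the images $\rho(\mathcal{F}_H)$ are then forced by $\rho(\mathcal{F}_H)=\bigcap_{\mathcal{F}_H\subseteq N(S)}N(\rho(S))$, so that the behaviour of $\rho$ at the level of the one-dimensional subspaces $S$ and $\mathcal{F}_H$ is completely determined by $\pi$. Conversely, by Remark \ref{a10} every permutation $\pi$ of $\Sigma$ arises in this way and extends to some $\rho\in\mathcal{P}$. The only remaining freedom is internal to the twin-point classes: by Lemma \ref{a6} the classes on the $\mathbb{V}$ side are precisely the sets $S\in\Sigma$ and those on the $\mathbb{T}$ side are precisely the sets $\mathcal{F}_H$, and by Remark \ref{cardSigma} each such class has exactly $q-1$ elements. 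Given $\pi$, the map $\rho$ must send each $S$ bijectively onto $\pi(S)$ and each $\mathcal{F}_H$ bijectively onto its forced image, and any choice of such bijections produces, through the twin-point mechanism described before Lemma \ref{a5}, a genuine automorphism.

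Counting then yields the formula. There are $(\frac{q^n-1}{q-1})!$ choices for $\pi$, since $|\Sigma|=\frac{q^n-1}{q-1}$ by Remark \ref{cardSigma}. For each of the $\frac{q^n-1}{q-1}$ classes $S$ on the $\mathbb{V}$ side there are $(q-1)!$ internal bijections, and likewise for each of the $\frac{q^n-1}{q-1}$ classes $\mathcal{F}_H$ on the $\mathbb{T}$ side, contributing a factor $((q-1)!)^{\frac{q^n-1}{q-1}}$ from each side. By the multiplication principle $|\mathcal{P}|=(\frac{q^n-1}{q-1})!\,((q-1)!)^{2\frac{q^n-1}{q-1}}$. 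For part (2) the argument is identical, except that by Theorem \ref{mainth2}(2) the underlying bijection now runs $\Sigma\to\{\mathcal{F}_H\mid H\in\Sigma\}$ instead of $\Sigma\to\Sigma$; since both index sets have cardinality $\frac{q^n-1}{q-1}$ and the twin classes on both sides again have $q-1$ elements, the same bookkeeping gives $|\mathcal{T}|=(\frac{q^n-1}{q-1})!\,((q-1)!)^{2\frac{q^n-1}{q-1}}$. Adding the two disjoint contributions proves part (3).

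The main obstacle is verifying that the proposed correspondence is genuinely a bijection, in two respects. First, one must confirm injectivity, i.e.\ that distinct permutations $\pi$ or distinct internal bijections always produce distinct automorphisms, and surjectivity, i.e.\ that every $\rho\in\mathcal{P}$ is captured; surjectivity is exactly what Theorem \ref{mainth2} provides, while injectivity requires checking that the internal choices on the two sides are independent of one another and of $\pi$. Second, and more delicate, one must ensure that \emph{every} permutation $\pi$ of $\Sigma$ really does extend, i.e.\ that the forced image $\bigcap_{\mathcal{F}_H\subseteq N(S)}N(\pi(S))$ is always a legitimate one-dimensional subspace of the form $\mathcal{F}_{H'}$; this well-definedness is precisely the point that Theorem \ref{mainth2} and Lemma \ref{intersection} (through the hyperplane-intersection result of \cite{Hoffman}) are invoked to guarantee. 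Once these are in hand, the factorisation of an automorphism into a permutation of $\Sigma$ times independent internal permutations of the twin classes is clean, and the cardinalities multiply exactly as stated.
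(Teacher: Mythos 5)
Your proposal is correct and follows essentially the same route as the paper's own proof: both invoke Corollary \ref{V or T} (via Remark \ref{a10}) to split the automorphism group as the disjoint union $\mathcal{P}\sqcup\mathcal{T}$, use Theorem \ref{mainth2} to reduce each piece to a choice of a permutation of $\Sigma$ (respectively a bijection $\Sigma\to\{\mathcal{F}_H\mid H\in\Sigma\}$) together with arbitrary internal bijections on the $\frac{q^n-1}{q-1}$ twin classes of size $q-1$ on each side, and multiply. Your extra care about well-definedness of the forced images $\bigcap_{\mathcal{F}_H\subseteq N(S)}N(\rho(S))$ and the bijectivity of the correspondence is exactly the content the paper delegates to Theorem \ref{mainth2} and Lemma \ref{intersection}, so no new ideas are needed or missing.
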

 \begin{proof} First remember that $|\Sigma|= |\{\mathcal{F}_H| H\in
 \Sigma\}|=\frac{q^n-1}{q-1}$.\par
 \textbf{ (1)} By Theorem \ref{mainth2}, $\rho\in \mathcal{P}$ if and only
 if $\rho$ acts as a permutation on $\Sigma$ and for any
  $H\in \Sigma$, $\rho(\mathcal{F}_H)$
 is determined uniquely by
 $\rho(\mathcal{F}_H)=\bigcap_{\mathcal{F}_H\subseteq
N(S)}N(\rho(S))$, say $\mathcal{F}_{H'}$ for a $H'\in\Sigma$. There
are $(\frac{q^n-1}{q-1})!$
 permutations on $\Sigma$. Note that for each pair $S, S'\in \Sigma
 $, there are exactly $(q-1)!$ bijections from $S$
 to $S'$. So for every permutation on $\Sigma$, there are exactly
 $((q-1)!)^{\frac{q^n-1}{q-1}}$ bijections from $\mathbb{V}$
  to $\mathbb{V}$. Hence
  all permutations on $\Sigma$ in total have
  $(\frac{q^n-1}{q-1})!((q-1)!)^{\frac{q^n-1}{q-1}}$ bijections
  (i.e. one to one functions) from $\mathbb{V}$ to $\mathbb{V}$.
On the other hand,  there are exactly $(q-1)!$ bijections from
$\mathcal{F}_H$ to $\mathcal{F}_{H'}$ (note that we fixed
$\rho(\mathcal{F}_H)=\mathcal{F}_{H'}$). Since
  $|\{\mathcal{F}_H| H\in \Sigma\}|=\frac{q^n-1}{q-1}$,
    all permutations $\rho$ on $\Sigma$ in total have exactly
 $((q$$-$$1)!)^{\frac{q^n-1}{q-1}}$ acceptable
bijections from  $\mathbb{T}$ to $\mathbb{T}$. By multiple principle
there are
 exactly $(\frac{q^n-1}{q-1})!((q$$-$$1)!)^{\frac{q^n-1}{q-1}}((q$$-$$1)!)^{\frac{q^n-1}{q-1}}$
   automorphisms of $\digamma(\mathbb{V})$ such a $\rho$,
    with $\rho(\mathbb{V})=\mathbb{V}$ and $\rho(\mathbb{T})=\mathbb{T}$.
    So $|\mathcal{P}|=(\frac{q^n-1}{q-1})!((q-1)!)^{2\frac{q^n-1}{q-1}}$.\par
\textbf{(2)} By Theorem \ref{mainth2}, $\rho\in \mathcal{T}$ if and
only if $\rho$ acts as a bijection
 $\Sigma$$\rightarrow$$\{\mathcal{F}_H|$$H\in$$\Sigma\}$ and for every $H\in\Sigma$
  $\rho(\mathcal{F}_H)$ is
  determined uniquely by
   $\bigcap_{\mathcal{F}_H\subseteq N(S)}N(\rho(S))$. For counting all elements of
 $\mathcal{T}$  we do similar as in the proof of the part (1). For each
pair $S,H\in \Sigma$ there exist
   $(q-1)!$ bijections from $S$ to $\mathcal{F}_H$.
   So for every fixed bijection from $\Sigma$ to $\{\mathcal{F}_H| H\in
   \Sigma\}$ there exist  $((q-1)!)^{\frac {q^n-1}{q-1}}$ bijections
   from $\mathbb{V}$ to $\mathbb{T}$.  There are exactly $(\frac{q^n-1}{q-1})!$
   bijections from $\Sigma$ to $\{\mathcal{F}_H| H\in\Sigma\}$,
    all of them in total have
 exactly $(\frac{q^n-1}{q-1})!((q-1)!)^{\frac{q^n-1}{q-1}}$
 bijections such that each of them sends
  $\mathbb{V}$ to $\mathbb{T}$. Note that since for every $H\in
  \Sigma$,
  $\rho(\mathcal{F}_H)$  is  determined uniquely, similar as the proof of part
  (1)there are exactly $((q-1)!)^{\frac {q^n-1}{q-1}}$ acceptable bijections
   such that each of them sends
   $\mathbb{T}$ to $\mathbb{V}$.
   Hence by multiple principle
  $|\mathcal{\mathcal{T}}|=(\frac{q^n-1}{q-1})!((q-1)!)^{2\frac{q^n-1}{q-1}}$.\par
 \textbf{(3)} By Remark \ref{a10} and Theorem \ref{mainth2} the cardinal number of the automorphism
  group of the graph $\digamma(\mathbb{V})$ is $|\mathcal{P}|+|\mathcal{T}|$
where the sets $\mathcal{P}$ and $\mathcal{T}$ are as in the parts
(1) and (2), respectively.
\end{proof}
In the following result the number of all automorphisms such a
$\rho$ of the graph $\digamma(\mathbb{V})$ is determined such that
$\rho(S)=S$ and $\rho(\mathcal{F}_H)=\mathcal{F}_H$ for all $S, H\in
\Sigma$. These kinds of automorphisms are exactly the set of all
automorphisms which stabilize the twin points( i.e. they are
permutations on twin points).
\begin{theorem}\label{identityrho} Let
$\mathcal{I}=\{\rho|\, \rho$ is an automorphism of the graph
$\digamma(\mathbb{V})$ and $\rho(S)$$=$$S$ for all
  $S\in \Sigma \}$.
 Then $|\mathcal{I}|= ((q-1)!)^{2\frac{q^n-1}{q-1}}$. \end{theorem}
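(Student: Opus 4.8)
The plan is to identify $\mathcal{I}$ with the group of permutations of twin points of $\digamma(\mathbb{V})$ and then count the latter combinatorially. First I recall from Lemma \ref{a6} (together with the observation that a vertex of $\mathbb{V}$ and a vertex of $\mathbb{T}$ can never be twin points, since their neighbourhoods lie on opposite sides of the bipartition and are each nonempty) that the twin equivalence classes of $\digamma(\mathbb{V})$ are exactly the sets $S$ with $S\in\Sigma$ and the sets $\mathcal{F}_H$ with $H\in\Sigma$. By Remark \ref{cardSigma} each such class has precisely $q-1$ elements, and there are $|\Sigma|+|\{\mathcal{F}_H\mid H\in\Sigma\}|=2\frac{q^n-1}{q-1}$ of them.

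Next I would prove that $\mathcal{I}$ coincides with the set of permutations of twin points, i.e.\ with those permutations of $V$ that stabilise every twin class and act as an arbitrary permutation inside each class. For one inclusion, any permutation of twin points is an automorphism (as observed in the opening discussion of this section) and fixes each $S\in\Sigma$ setwise, hence lies in $\mathcal{I}$. For the reverse inclusion, let $\rho\in\mathcal{I}$, so $\rho(S)=S$ for all $S\in\Sigma$. The key step is to deduce that $\rho$ also fixes every $\mathcal{F}_H$ setwise, and this is exactly where Lemma \ref{intersection} is used: for every $H\in\Sigma$,
\[
\rho(\mathcal{F}_H)=\bigcap_{\mathcal{F}_H\subseteq N(S)}N(\rho(S))=\bigcap_{\mathcal{F}_H\subseteq N(S)}N(S)=\mathcal{F}_H,
\]
the middle equality because $\rho(S)=S$, and the last one being the representation of $\mathcal{F}_H$ as an intersection of hyperspaces $N(S)$ established in the proof of Lemma \ref{intersection}. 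Hence $\rho$ stabilises every twin class, so it is a permutation of twin points.

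Finally I would carry out the count. Since adjacency in $\digamma(\mathbb{V})$ depends only on the twin classes of its endpoints---any $v\in S$ is adjacent to any $f\in\mathcal{F}_H$ precisely when $\mathcal{F}_H\subseteq N(S)$---an arbitrary choice of a permutation of the $q-1$ elements inside each class, made independently over all classes, yields an automorphism, and distinct choices give distinct maps. There are $\frac{q^n-1}{q-1}$ classes of the form $S$ and $\frac{q^n-1}{q-1}$ classes of the form $\mathcal{F}_H$, each contributing $(q-1)!$ independent choices, so by the multiplication principle
\[
|\mathcal{I}|=\big((q-1)!\big)^{\frac{q^n-1}{q-1}}\cdot\big((q-1)!\big)^{\frac{q^n-1}{q-1}}=\big((q-1)!\big)^{2\frac{q^n-1}{q-1}}.
\]
The only genuinely nontrivial point is the forced stabilisation of each $\mathcal{F}_H$ via Lemma \ref{intersection}; once that is in hand, the independence of the within-class permutations (and hence the product count) follows directly from the twin-point description of adjacency, and the argument is uniform in $n\geq 2$.
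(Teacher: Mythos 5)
Your proof is correct, and its skeleton is the same as the paper's: reduce to showing that $\rho(S)=S$ for all $S\in \Sigma$ forces $\rho(\mathcal{F}_H)=\mathcal{F}_H$ for all $H\in \Sigma$, then count independent permutations inside the $2\frac{q^n-1}{q-1}$ twin classes of size $q-1$ by the multiplication principle. The genuine difference is the citation used for the forcing step: the paper invokes Theorem \ref{mainth2}, while you compute directly from Lemma \ref{intersection} that $\rho(\mathcal{F}_H)=\bigcap_{\mathcal{F}_H\subseteq N(S)}N(\rho(S))=\bigcap_{\mathcal{F}_H\subseteq N(S)}N(S)=\mathcal{F}_H$. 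This buys you two things. First, Theorem \ref{mainth2} is stated only for $n\geq 3$, whereas Theorem \ref{identityrho} carries no such hypothesis; since Lemma \ref{intersection} (which rests on Theorem \ref{maintwo}) is valid for all $n\geq 2$, your argument legitimately covers the case $n=2$, which the paper's proof, as written, technically does not (for $n=2$ one could alternatively note that $\mathcal{F}_H=N(S)$ for a unique $S\in\Sigma$, so $\rho(\mathcal{F}_H)=\rho(N(S))=N(\rho(S))=N(S)=\mathcal{F}_H$). Second, you make explicit the converse inclusion that the count actually requires, namely that \emph{every} permutation stabilizing each $S$ and each $\mathcal{F}_H$ is an automorphism, which follows since adjacency depends only on twin classes by the complete bipartite structure in Lemma \ref{a6}; the paper leaves this implicit, relying on the permutation-of-twin-points discussion at the start of Section 2 and the corollary following Lemma \ref{a7}. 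Your preliminary identification of the twin classes, including the observation that a vertex of $\mathbb{V}$ and a vertex of $\mathbb{T}$ can never be twins, is also correct and is needed for the count to be exact.
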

\begin{proof} Assume $\rho\in \mathcal{I}$ is an automorphism of the graph
$\digamma(\mathbb{V})$. Then since $\rho(S)$$=$$S$ for all
$S\in\Sigma$, by Theorem \ref{mainth2}
$\rho(\mathcal{F}_H)$$=$$\mathcal{F}_H$ for all $H\in \Sigma$. Every
$S\in \Sigma$ has $(q-1)!$ permutations. Note that
$|\Sigma|=\frac{q^n-1}{q-1}$. So there are exactly
$((q-1)!)^{\frac{q^n-1}{q-1}}$  permutations such a $\rho$ on
$\mathbb{V}$ such that $\rho(S)=S$ for all $S\in\Sigma$. Similarly
there are $((q-1)!)^{\frac{q^n-1}{q-1}}$  permutation such a $\rho$
on $\mathbb{T}$ such that $\rho(\mathcal{F}_H)=\mathcal{F}_H$ for
all $H\in\Sigma$. So  by multiple principle  $\digamma (\mathbb{V})$
has exactly $((q-1)!)^{2\frac{q^n-1}{q-1}}$ automorphisms such a
$\rho$ such that $\rho(S)=S$ and $\rho
(\mathcal{F}_H)=\mathcal{F}_H$ for all $S, H\in \Sigma$. Hence
$|\mathcal{I}|=((q-1)!)^{2\frac{q^n-1}{q-1}}$
\end{proof}
\section{\bf Formolizing the automorphisms of $\digamma(\mathbb{V})$ }\
The aim of this section is decomposing the automorphisms of
$\digamma(\mathbb{V})$ to the combinations of known automorphisms.
In fact, the aim is formolizing the automorphisms.
\begin{lemma}Assume $n\geq3$, $B$$=$$\{e_1,e_2,\dots$$,e_n\}$ is the
standard basis of $\mathbb{V}_0$ and $\rho$ is an automorphism of
$\digamma(\mathbb{V})$. Then the set
$\rho(B)$$=$$\{\rho(e_1),\rho(e_2),\dots$ $,\rho(e_n)\}$  is
linearly independent.
\end{lemma}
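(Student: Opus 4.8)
The plan is to reduce the linear independence of the $n$ vectors in $\rho(B)$ to an intersection-of-neighbourhoods computation that transports cleanly through $\rho$. First I would invoke Corollary \ref{V or T}: since $n\geq 3$, either $\rho(\mathbb{V})=\mathbb{V}$ or $\rho(\mathbb{V})=\mathbb{T}$, so all of $\rho(e_1),\dots,\rho(e_n)$ lie in a single vector space (either $\mathbb{V}_0$ or $\mathbb{T}_0$) and the statement is meaningful. Writing $S_i$ for the one-dimensional subspace $\langle e_i\rangle$ without its zero vector (so $S_i\in\Sigma$), Lemma \ref{a6} gives $N(S_i)=N(e_i)=\{f_u\mid u\cdot e_i=0\}$, i.e. $N(S_i)$ is exactly the set of nonzero functionals $f_u$ whose $i$-th coordinate vanishes. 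Hence $\bigcap_{i=1}^n N(S_i)$ consists of those $f_u$ with $u_1=\cdots=u_n=0$, forcing $u=0$; since the zero functional is excluded, $\bigcap_{i=1}^n N(S_i)=\emptyset$.

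The key step is to push this empty intersection forward through $\rho$. Because $\rho$ is an automorphism, Remark \ref{Nandrho} yields $\rho(N(S_i))=N(\rho(S_i))$, and since $\rho$ is a bijection it commutes with intersections; therefore $\bigcap_{i=1}^n N(\rho(S_i))=\rho\big(\bigcap_{i=1}^n N(S_i)\big)=\rho(\emptyset)=\emptyset$. By Lemma \ref{a9} each $\rho(S_i)$ is a one-dimensional subspace without its zero vector, so $N(\rho(S_i))$ is a hyperspace without its zero vector (in $\mathbb{T}_0$ when $\rho(\mathbb{V})=\mathbb{V}$, in $\mathbb{V}_0$ when $\rho(\mathbb{V})=\mathbb{T}$). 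Writing $\rho(S_i)=\langle w_i\rangle\setminus\{0\}$ or $\rho(S_i)=\mathcal{F}_{\langle w_i\rangle\setminus\{0\}}$ according to the case, each $N(\rho(S_i))$ corresponds to the hyperspace $w_i^{\perp}=\{x\mid w_i\cdot x=0\}$.

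It remains to translate $\bigcap_{i=1}^n \big(w_i^{\perp}\setminus\{0\}\big)=\emptyset$, i.e. $\bigcap_{i=1}^n w_i^{\perp}=\{0\}$, into linear independence. This is the standard fact that the common null space of the functionals $f_{w_1},\dots,f_{w_n}$ is zero precisely when these $n$ functionals span the $n$-dimensional dual, equivalently when $w_1,\dots,w_n$ are linearly independent in $\mathbb{V}_0$; concretely one reads this off from \cite[Corollary after Theorem 16]{Hoffman}, exactly as in the proof of Lemma \ref{intersection}. Finally, since $\rho(e_i)\in\rho(S_i)$ generates $\rho(S_i)$, each $\rho(e_i)$ is a nonzero scalar multiple of $w_i$ (respectively of $f_{w_i}$), so $\rho(B)$ is linearly independent; the $\mathbb{F}_q$-isomorphism $\mathbb{V}_0\cong\mathbb{T}_0$, $u\mapsto f_u$, handles the case $\rho(\mathbb{V})=\mathbb{T}$ identically. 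The only delicate points I anticipate are running the two cases in parallel and stating the orthogonality reading of $N(\rho(S_i))$ consistently in $\mathbb{V}_0$ versus $\mathbb{T}_0$; the linear-algebra core (``$n$ hyperspaces meet only in $0$ iff their normals span'') is routine.
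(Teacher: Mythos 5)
Your argument is correct: the empty intersection $\bigcap_{i=1}^{n} N(S_{e_i})=\emptyset$ (no nonzero functional kills a whole basis), transported by Remark \ref{Nandrho} and the injectivity of $\rho$, does force the generators $w_i$ of the lines $\rho(S_{e_i})$ to span $\mathbb{V}_0$ (resp.\ $\mathbb{T}_0$), and each $\rho(e_i)$ is a nonzero scalar multiple of $w_i$, so $\rho(B)$ is independent; invoking Corollary \ref{V or T} at the outset is exactly what keeps all the $\rho(S_{e_i})$ on one side, without which the empty intersection would carry no information. The paper proves the statement by the pulled-back contrapositive instead of your pushed-forward direct form: assuming $\rho(B)$ dependent, it produces a nonzero matrix $A$ with $A\rho(e_i)=0$ for all $i$, reads each nonzero row $R_j$ as a functional adjacent to every $\rho(e_i)$, and applies $\rho^{-1}$ (using Corollary \ref{V or T} to know $\rho^{-1}$ carries $\mathbb{T}$ to $\mathbb{T}$) to obtain a nonzero functional vanishing on every $e_i$, which is absurd. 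The invariant underneath is the same in both proofs --- an automorphism preserves ``these vertices have a common neighbor,'' and a basis has no common neighbor in $\digamma(\mathbb{V})$ --- so the difference is one of packaging rather than of idea. Your route works at the level of twin-point classes, leaning on Lemmas \ref{a6} and \ref{a9}, Remark \ref{Nandrho}, and the hyperplane-intersection count from \cite{Hoffman} used in Lemma \ref{intersection}; this buys uniformity with the Section 2 machinery and makes the two cases $\rho(\mathbb{V})=\mathbb{V}$ and $\rho(\mathbb{V})=\mathbb{T}$ genuinely parallel. The paper's route is more bare-hands (a single matrix pull-back, no appeal to $\Sigma$ beyond Corollary \ref{V or T}) and correspondingly shorter, but it argues by contradiction and disposes of the second case only by saying ``similarly.'' Either proof is acceptable; yours is arguably cleaner in that it identifies $\rho(S_{e_i})$ explicitly rather than only refuting dependence.
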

\begin{proof} Note that by Lemma
 \ref{V or T}, $\rho(B)\subseteq
 \mathbb{V}$ or $\rho(B)\subseteq
 \mathbb{T}$. Assume $\rho(B)\subseteq
 \mathbb{V}$. With no lose of generality  define $\rho(0)=0$ and $\rho(f_0)=f_0$.
 By the contrary assume
 $\rho(B)=\{\rho(e_1),\rho(e_2),\dots,\rho(e_n)\}$ is linearly dependent.
 Then there exists a  nonzero $n\times n$ matrix $A$ on $\mathbb{F}_q$ such that
 $A\rho(e_i)=0$ for all $i=1,2,...,n$.  Define the matrix $\rho^{-1}(A)$
 whose $i$th row is  $\rho^{-1}(R_i)$ ,
  where $R_i$ is the $i$th row of $A$ as a transpose of a vector of $\mathbb{V}_0$,
   for $i=1,2,...,n$. Notice that $\rho^{-1}(R_i)$ is zero when $R_i$ is zero.
  Then since $\rho^{-1}$ is an automorphism
    of $\digamma(\mathbb{V})$,  $R_i\rho(e_1)=0$ implies
   $\rho^{-1}(R_i)(e_1)=0$, for all $i=1,2,...,n$ and hance the the
   first column of $\rho^{-1}(A)$ is zero. Then $R_i\rho(e_2)=0$ implies
   $\rho^{-1}(R_i)(e_2)=0$, for all $i=1,2,...,n$ and hance the the
   second column of $\rho^{-1}(A)$ is zero. Similarly we conclude
   that all columns of $\rho^{-1}(A)$ is zero, a contradiction, since $A$ has a nonzero row say
   $Rj$
   and $\rho^{-1}$ is an automorphism of $\digamma(\mathbb{V})$, so
   $\rho^{-1}(R_j)\neq0$. So $\rho(B)$ is  linearly independent.
 For the case $\rho(B)\subseteq \mathbb{T}$ similarly one can see
 that $\rho(B)$ is  linearly independent.
\end{proof}
\begin{theorem} \label{n=3theorem}For $n \geq 3$ let $\rho$ be a
bijective mapping on the vertex set of $\digamma(\mathbb{V})$. Then
$\rho$ is an automorphism of $\digamma(\mathbb{V})$ if and only if
exactly one of the following condition holds:\\
\indent\emph{\textbf{(1)}} $\rho(\mathbb{V})=\mathbb{V}$ and $\rho =
\chi_{P}\circ\pi \circ\tau$;\\
 \indent\emph{\textbf{(2)}} $\rho(\mathbb{V})=\mathbb{T}$ and $\rho =
\sigma \circ\chi_{P}\circ\pi \circ\tau$,\\
where $\chi_P$ is the regular automorphism induced by an invertible
matrix $P$, $\pi$ is the extending of a field automorphism of
$\mathbb{F}_q$ on $\digamma(\mathbb{V})$, $\tau$ is a permutation of
twin points of $\digamma(\mathbb{V})$ and $\sigma$ is the
automorphism with $\sigma(f_u)=u$ and $\sigma(u)=f_u$, for every
$u\in \mathbb{V}$.
 \end{theorem}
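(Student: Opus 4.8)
The plan is to prove both implications, organizing the argument around the dichotomy of Corollary \ref{V or T}. For the ``if'' direction I would first recall that each of $\chi_P$, $\pi$ and $\tau$ is an automorphism of $\digamma(\mathbb{V})$ (established earlier), and that $\sigma$ is an automorphism because $f_u(v)=u^Tv=v^Tu=f_v(u)$, so $f_u\sim v$ if and only if $v\sim f_u$; hence every composition appearing in (1) and (2) is again an automorphism. Since $\chi_P$, $\pi$ and $\tau$ each fix $\mathbb{V}$ setwise while $\sigma$ interchanges $\mathbb{V}$ and $\mathbb{T}$, form (1) forces $\rho(\mathbb{V})=\mathbb{V}$ and form (2) forces $\rho(\mathbb{V})=\mathbb{T}$. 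As $\mathbb{V}\cap\mathbb{T}=\emptyset$ these two possibilities are mutually exclusive, which yields both the ``if'' direction and the ``exactly one'' clause.

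For the ``only if'' direction, let $\rho$ be an arbitrary automorphism; by Corollary \ref{V or T} either $\rho(\mathbb{V})=\mathbb{V}$ or $\rho(\mathbb{V})=\mathbb{T}$. I would reduce the second case to the first: since $\sigma$ satisfies $\sigma^2=\mathrm{id}$ and interchanges the two classes, $\sigma\circ\rho$ is an automorphism with $(\sigma\circ\rho)(\mathbb{V})=\mathbb{V}$; once we have written $\sigma\circ\rho=\chi_P\circ\pi\circ\tau$, applying $\sigma$ gives $\rho=\sigma\circ\chi_P\circ\pi\circ\tau$, which is precisely form (2). So it suffices to treat the case $\rho(\mathbb{V})=\mathbb{V}$.

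In that case $\rho$ acts as a permutation of $\Sigma$ (Theorem \ref{mainth2}(1)), hence induces a bijection of the projective space $PG(n-1,q)$ of one-dimensional subspaces of $\mathbb{V}_0$. The crucial step is to verify that this bijection is a collineation. I would encode collinearity through neighbour sets: since $N(S)$ is exactly the annihilator (minus zero) of $S$ in $\mathbb{T}_0$, and annihilators reverse inclusion and send sums to intersections, a one-dimensional subspace $S_3$ lies in the plane spanned by distinct $S_1,S_2\in\Sigma$ if and only if $N(S_1)\cap N(S_2)\subseteq N(S_3)$. Because $\rho$ is a bijection with $\rho(N(S))=N(\rho(S))$ (Theorem \ref{maintwo}), it commutes with intersections, so applying $\rho$ to this inclusion gives $N(\rho(S_1))\cap N(\rho(S_2))\subseteq N(\rho(S_3))$; since $\rho(S_1)\neq\rho(S_2)$ automatically span a plane, this says $\rho(S_3)$ lies in that plane. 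Thus $\rho$ maps every projective line into a projective line, and being a bijection between equicardinal lines it is a collineation.

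Finally, because $n\ge 3$ the projective space has dimension $\ge 2$, so by the Fundamental Theorem of Projective Geometry the collineation induced by $\rho$ comes from a semilinear bijection of $\mathbb{V}_0$; the preceding lemma (that $\rho$ carries the standard basis to a linearly independent set) guarantees the linear part is invertible, and in the standard basis the map has the form $v\mapsto P\,\pi_0(v)$ for an invertible $P$ and a field automorphism $\pi_0$. In the paper's notation $\chi_P\circ\pi$ then agrees with $\rho$ on every $S\in\Sigma$, so $\tau:=(\chi_P\circ\pi)^{-1}\circ\rho$ is an automorphism with $\tau(\mathbb{V})=\mathbb{V}$ that fixes every $S\in\Sigma$ setwise; by the intersection formula in Theorem \ref{mainth2}(1) it then also fixes every $\mathcal{F}_H$, so $\tau$ stabilises every twin class and is a permutation of twin points, giving $\rho=\chi_P\circ\pi\circ\tau$. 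I expect the main obstacle to be the collinearity step—correctly encoding ``$S_3$ lies in the plane of $S_1,S_2$'' through the annihilator identity $N(S_1)\cap N(S_2)\subseteq N(S_3)$ and checking that $\rho$ respects it—together with the clean invocation of the Fundamental Theorem of Projective Geometry, whose hypothesis $n\ge 3$ is exactly what rules out the degenerate disconnected behaviour seen for $n=2$.
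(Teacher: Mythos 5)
Your proof is correct, but its core—the case $\rho(\mathbb{V})=\mathbb{V}$—follows a genuinely different route from the paper. The paper normalizes by $\chi_{P^{-1}}$ (with $P$ the matrix whose columns are the $\rho(e_i)$) so that $e_i\mapsto e_i$, and then imports the coordinate machinery of {\cite[Theorem 4.1]{XWang1}}: it defines permutations $\pi_{ij},\theta_{ij}$ of $\mathbb{F}_q$ via $\rho_1(e_i+ae_j)\equiv e_i+\pi_{ij}(a)e_j$ and $\rho_1(f_{e_i+ae_j})\equiv f_{e_i+\theta_{ij}(a)e_j}$, proves a chain of claims relating them through adjacencies such as $f_{e_i-a^{-1}e_j}\sim e_i+ae_j$, extracts the field automorphism $\pi(a)=\pi_{12}(a)/\pi_{12}(1)$, and corrects by a diagonal matrix $Q=\mathrm{diag}(1,\pi_{12}(1),\dots,\pi_{1n}(1))$; the residual map is then shown to fix every $S\in\Sigma$ and, via Lemma \ref{intersection}, every $\mathcal{F}_H$, so it is a twin-points permutation. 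You replace all of this by a projective-geometry argument: the induced permutation of $\Sigma=PG(n-1,q)$ preserves collinearity because of the annihilator identity ($S_3\subseteq S_1+S_2$ iff $N(S_1)\cap N(S_2)\subseteq N(S_3)$, preserved since $\rho(N(S))=N(\rho(S))$ and bijections commute with intersections), hence is a collineation, and the Fundamental Theorem of Projective Geometry produces the semilinear map $\chi_P\circ\pi$ in one stroke; your closing step (the quotient fixes each $S$, hence each $\mathcal{F}_H$ by Lemma \ref{intersection}, hence is $\tau$) and your treatment of the case $\rho(\mathbb{V})=\mathbb{T}$ via $\sigma$ coincide with the paper's. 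Your approach buys brevity and makes transparent exactly why $n\geq 3$ is required (projective dimension at least $2$ is the hypothesis of the Fundamental Theorem); its cost is reliance on a substantial external theorem the paper never cites, whereas the paper's computation is elementary, essentially self-contained modulo \cite{XWang1}, and explicitly constructs $P$, $Q$ and $\pi$. Two minor points: your appeal to the lemma on linear independence of $\rho(B)$ is redundant, since the Fundamental Theorem already yields a \emph{bijective} semilinear map, so $P$ is automatically invertible; and your line-onto-line step tacitly uses finiteness and equicardinality of lines to conclude the induced map on lines is a bijection, which is valid here but worth stating.
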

 \begin{proof} The sufficiency conditions is clear. So let $\rho$ be an
 automorphism $\digamma(\mathbb{V})$.
  By Lemma \ref{V or T} either $\rho(\mathbb{V})=\mathbb{V}$ or
 $\rho(\mathbb{V})=\mathbb{T}$.\\
\indent\textbf{Case 1:} Assume $\rho(\mathbb{V})=\mathbb{V}$. We
will show that the condition (1) holds.
 Let $P$$=$$(\rho(e_1), \rho(e_2), ... , \rho(e_n))$ be the block matrix
with $\rho(e_i)$ as its $i$th column and denote $\chi_{_{P^{-1}}}o
\rho$ by $\rho_1$. Then $\rho_1(e_i)$$=$$\chi_{_{P^{-1}}}o
\rho(e_i)$$=$$(P^{-1})\rho(e_i)$$=$$e_i$, for $i = 1, 2,..., n$. So
$\rho_1(e_i)$$=$$e_i$, for all $i$$=$$1, 2,..., n$. A nonzero vector
$u\in \mathbb{V}$ is  said to be monic if its first (from top)
nonzero entry is 1. Set $W=\{u\in\mathbb{V}|\, u$ is a monic
vector$\}$. Now, for every $v \in \mathbb{V}$, $\rho_1(v)=ru$ for a
scalar $r\in \mathbb{F}_q$ and a unique $u \in W$, which will be
written as $\rho_1(v)\equiv u$ (mod $\mathbb{F}_q$).  For every
$u\in W$ set $S_u=\{ru|\, 0\neq r\in \mathbb{F}_q\}$.  Then  $S_u\in
\Sigma$ is a 1-dimensional subspace without zero of $\mathbb{V}_0$.
Clearly if $u_1\neq u_2 \in W$, then $S_{u_1}\cap
S_{u_2}=\emptyset$. Also for every $t\in \mathbb{V}$,
$\rho_1(f_t)$$=$$\chi_{_{P^{-1}}}o\rho(f_t)$$=$$f_{ru}=rf_{u} $ for
a scalar $r\in \mathbb{F}_q$ and a unique $u\in W$, we say
$\rho_1(f_t)$$\equiv$$f_{u}$(mod $\mathbb{F}_q$). Also for every
$u\in W$, set $\mathcal{F}^{u}$$=$$\{f_{ru}|\, 0\neq r\in
\mathbb{F}_q\}$. Then $\mathcal{F}^{u}$ is a 1-dimensional subspace
without zero of $\mathbb{T}_0$. Also $u_1\neq u_2 \in W$, then
$\mathcal{F}^{u_1}\cap \mathcal{F}^{u_2}=\emptyset$. It is easy to
see that $\Sigma=\{S_u| u\in W\}$ and $\{\mathcal{F}^u| u\in
W\}=\{\mathcal{F}_H| H\in \Sigma\}$. Similar as the proof of
{\cite[Theorem 4.1]{XWang1}} we will continue the proof with some
claims. Some of climes are  similar as those of {\cite[Theorem
4.1]{XWang1}} and we will point to the similarities
 every where we use. Note that for every $v=\sum_{i=1}^n
a_ie_i \in \mathbb{V}$, $f_v=f_{(\sum_{i=1}^n a_ie_i)}= \sum_{i=1}^n
a_if_{e_i}$. In fact considering {\cite[Theorem 4.1]{XWang1}}, here
to have a similar method to prove our theorem for the graph
$\digamma(\mathbb{V})$, we have replaced $E_{1i}$ the elementary
matrix on $\mathbb{F}_q$, by the linear functional $f_{e_i}\in
\mathbb{T}$, for $i=1,2,\ldots,n$. Consequently, we have to replace
the matrix $\sum_{i=1}^n a_i E_{1i}$ by the linear functional
$f_{(\sum_{j=1}^n a_ie_i)}$$=$$\sum_{i=1}^n a_if_{e_i}$. So
consequently, in the proof of {\cite[Theorem 4.1]{XWang1}}, one can
see that in this paper we have to replace the notation (mod $G$)
with notation (mod $\mathbb{F}_q$), where $G$ is the group of
$n\times n$ invertible
matrices on $\mathbb{F}_q$. So by the mentioned replacements, we have a few claims below:\\
 \textbf{Claim 1}. For $v$$=$$\sum_{i=1}^n
a_ie_i$$\in$$W$, assume $\rho_1 (\sum_{i=1}^n
a_if_{e_i})$$\equiv$$\sum_{i=1}^n a'_if_{e_i}$ (mod $\mathbb{F}_q$),
with $\sum_{i=1}^n a_i'e_i\in W$ the set introduced before the
claim. Then
 $a_i=0$ if and
 only if $a_i'=0$. In particular, $\rho_1(f_{e_i})$$\equiv$$f_{e_i}$ (mod
 $\mathbb{F}_q$). Similarly for $\sum_{i=1}^n a_ie_i$$\in$$W$, assume that $\rho_1
(\sum_{i=1}^n a_ie_i)$$\equiv$$\sum_{i=1}^n a'_ie_i$ (mod
$\mathbb{F}_q$), with $\sum_{i=1}^n a_i'e_i$$\in$$W$, then $a_i=0$
if and only if $a_i'=0$.\par
 The proof is similar as  that of
{\cite[Claim 3]{XWang1}}.\par
 So  for any $a\in \mathbb{F}_q$,
 there exists a unique $a'\in \mathbb{F}_q$ such that
 $\rho_1(e_i + ae_j)\equiv e_i+ a'e_j$ (mod $\mathbb{F}_q$).
  Thus for $1\leq i<j\leq n$, we can define a permutation $\pi_{ij}$ on $\mathbb{F}_q$
   such that $\pi_{ij}(0) = 0$ and $\rho_1(e_i +
ae_j)\equiv e_i+\pi_{ij}(a)e_j$ (mod $\mathbb{F}_q$). Similarly  for
$1\leq i<j\leq n$, we can define a permutation $\theta _{ij}$ on
$\mathbb{F}_q$
   such that $\theta_{ij}(0) = 0$ and $\rho_1(f_{e_i +
ae_j})$$\equiv$$f_{e_i+\theta_{ij}(a)e_j}$ (mod $\mathbb{F}_q$).
From $f_{e_i-a^{-1} e_j}\sim e_i+ae_j$ it follows that
 $f_{e_i +\theta_{ij}(-a^{-1})e_j}\sim e_i+\pi_{ij}(a)e_j$.
  Then similar as {\cite[Claim
5]{XWang1}} for every $0\neq a\in F_q$, we have
$\theta_{ij}(-a^{-1})\pi_{ij}(a)=-1$,
$\pi_{ij}(-a^{-1})\theta_{ij}(a)=-1$.\\
\textbf{Claim 2}. For $1\leq$$i$$<$$j$$\leq$$n$, we have
$\rho_1(f_{e_i+\sum_{j=i+1}^n a_je_j})$$\equiv$$f_{e_i +\sum^ n
_{j=i+1} \theta_{ij}(a_j)e_j}$ (mod $\mathbb{F}_q$) and $\rho_1(e_i
+\sum_{j=i+1}^n a_je_j)\equiv e_i +\sum_{j=i+1}^n \pi_{ij}(a_j)e_j$
(mod $\mathbb{F}_q$).\par
 The proof is similar as that of {\cite[Claim 6]{XWang1}}.\par
\textbf{Claim 3}. For $2\leq i<j\leq$$n$ and $a\in\mathbb{F}_q$, we
have $\theta_{ij}(a)$$=$$\pi_{1i}(1)\theta_{1j}(a)$ and
$\pi_{ij}(a)$$=$$\theta_{1i}(1)\pi_{1j}(a)$.\par
The proof is similar as that of {\cite[Claim 7]{XWang1}}.\\
\textbf{Claim 4}. For $1$$\leq$$i<j$$\leq$$n$, we have
$\rho_1(e_i+\sum^n_{j=i+1}
a_je_j)$$\equiv$$e_i+\theta_{1i}(1)\sum^n_{ j=i+1}\pi_{1j}(a_j)e_j$
(mod $\mathbb{F}_q$) and $\rho_1(f_{e_i+\sum^n_{j=i+1}
a_je_j})\equiv f_{e_i +
\pi_{1i}(1)\sum^n_{j=i+1}\theta_{1j}(a_j)e_j}$ (mod $\mathbb{F}_q$),
where  $\theta_{11}(1)=\pi_{11}(1)=1$.\par
 The proof is similar as that of {\cite[Claim 8]{XWang1}}.\par
 Now, let  $\pi$ the function on $\mathbb{F}_q$
 defined as $\pi(a)=\frac{ \pi_{12}(a)}{\pi_{12}(1)}$ for any $a\in
\mathbb{F}_q$. Then similar as {\cite[Claim 9 and Clime
10]{XWang1}},  $\pi$ is a field automorphism of $\mathbb{F}_q$.
Let $Q=\emph{diag}(1, \pi_{12}(1), ... , \pi_{1n}(1))$ be a diagonal
matrix. Then by explaining after {\cite[Claim 10]{XWang1}},
$\pi^{-1}\circ \chi_{_{Q^{-1}}}\circ\rho_1$ sends any $u\in W$ to a
nonzero scalar multiple of $u$.  Denote $\pi^{-1} \circ
\chi_{_{Q^{-1}}}\circ\rho_1$ by $\rho_2$ for the convenience. Hence
 for every $u\in W$, $\{\rho_2(u), u\}\subseteq S_u$. So
 by Lemma \ref{a8},  $\rho_2(S_u)=S_u$, as every element $v\in S_u$
 and $u$ are linearly dependent and so $\rho_2(v)\in S_u$.
  So $\rho_2(S)=S$ for every $S\in \Sigma$. Then by Lemma
   \ref{intersection} $\rho_2(\mathcal{F}_H)= \bigcap_{\mathcal{F}_H\subseteq
N(S)}N(\rho_2(S))=\bigcap_{\mathcal{F}_H\subseteq
N(S)}N(S)=\mathcal{F}_H$ for all $H\in \Sigma$. Therfore $\rho_2$ is
a permutation of twin points of $\digamma(\mathbb{V})$.

Finally, in this case we have
$\pi^{-1}\circ\chi_{Q^{-1}}\circ\chi_{P^{-1}}\circ\rho = \tau$,
where $\tau$ is a permutation of twin points. Thus $\rho =
\chi_{P_0}\circ\pi\circ\tau$, where $P_0=PQ$.\par
 \indent\textbf{Case 2:} Assume  $\rho(\mathbb{V})=\mathbb{T}$.
 Then consider the permutation $\sigma$ on the vertex set of $\digamma(\mathbb{V})$
 such that $\sigma (f_u)=u$ and $\sigma(u)=f_u$ for every $u\in
 \mathbb{V}$.
 Then for every $u,v\in \mathbb{V}$,
 $v\sim f_u$ if and only if $f_v\sim u$ if and only if $\sigma (v)\sim\sigma
 (f_u)$. So $\sigma$ is an automorphism of $\digamma(\mathbb{V})$.
 Then, $\sigma^{-1}\circ\rho(\mathbb{V})=\mathbb{V}$ and
 $\sigma^{-1}\circ\rho$ is an automorphism of the graph $\digamma(\mathbb{V})$.
 So replacing $\rho$ by $\sigma^{-1}\circ\rho$ in part (1),
 implies $\sigma^{-1}\circ\rho=\chi_{P_0}\circ\pi\circ\tau$  and
  hence $\rho=\sigma\circ\chi_{P_0}\circ\pi \circ\tau$.
Hence the condition (2) holds. The proof is complete.
\end{proof}
Now, for the graph $\digamma(\mathbb{V})$ where $n=2$  the following
remark is needed.
\begin{remark} \label{sigma n=2} Assume $n=2$ and $\rho$ is
 an automorphism of $\digamma(\mathbb{V})$. Then by Remark \ref{n=2}
 and Lemma \ref{Lemn=2}, it is possible that for two $S_1\neq S_2\in
 \Sigma$, we have $\rho(S_1)\subseteq \mathbb{T}$ and $\rho(S_2)\subseteq
 \mathbb{V}$(i.e. $\rho(\mathbb{V})\notin \{\mathbb{V}, \mathbb{T}\}$). Now, define a permutation $\delta$ on the vertex set of
 $\digamma(\mathbb{V})$ such that:\\
  $\delta(f_u)$$=$$\left\{%
\begin{array}{ll}
    u, &\hbox{if $\rho(v)$$=$$f_u$, for a $v$$\in$$\mathbb{V}$;} \\
    f_u, &\hbox{if $\rho(f_v)$$=$$f_u$, for a $v$$\in$$\mathbb{V}$} \\
\end{array}%
\right.$ and\,
 $\delta(v)$$=$$\left\{%
\begin{array}{ll}
    f_v, &\hbox{if $\rho(f_u)$$=$$v$, for a $u$$\in$$\mathbb{V}$;} \\
    v, &\hbox{if $\rho(u)$$=$$v$, for a $u$$\in$$\mathbb{V}$.} \\
\end{array}%
\right.$\\ It is easy to see that
$\delta^{-1}\circ\rho(\mathbb{V})=\mathbb{V}$. Then $\delta$ is an
automorphism of $\digamma(\mathbb{V})$.
\end{remark}
 \begin{proof} Assume $f_x\sim y$. Then we have two cases:\\
\indent \textbf{Case 1)} Since $\rho$ is an an automorphism then,
$f_x=\rho(v)$, for a $v\in \mathbb{V}$, if and only if
$y=\rho(f_t)$, for a $t\in \mathbb{V}$.  Hence $\delta(f_x)=x$ if
and only if $\delta(y)=f_y$. Note
that $x\sim f_y$. Therefore $f_x\sim y$ if and only if $\delta(f_x)\sim\delta(y)$.\\
\indent\textbf{Case 2)} Similar as the Case 1, $f_x=\rho (f_v)$ for
a $v\in \mathbb{V}$ if and only if $\rho (y)=t $, for a $t\in
\mathbb{V}$.  Hence $\delta(f_x)=f_x$ if and only if $\delta(y)=y$.
 Therefore $f_x\sim y$ if and only if
 $\delta(f_x)\sim\delta(y)$.\par
 So $\delta$ is an automorphism of the graph $\digamma(\mathbb{V})$.
\end{proof}

\begin{remark}\label{phibar} Assume $n=2$ and $\phi$ is a
  permutation on $\mathbb{F}_q$ such that $\phi(0)=0$.
   We define a mapping $\overline{\phi}$ on the vertex set
of $\digamma(\mathbb{V})$ as follows for any  vectors $u=ae_1+be_2$
and $v=ce_1+de_2$ of $\mathbb{V}$:\\
                 $\overline{\phi}(f_u)$$=$$\left\{%
                         \begin{array}{ll}
           af_{e_1}$$+$$a\phi(a^{-1}b)f_{e_2},&\hbox{if $\emph{a}$$\neq$$0$} \\
                       f_u,& \hbox{if \emph{a}$$=$$0} \\
                            \end{array}%
                            \right.$ ,
                                 $\overline{\phi}(v)$$=$$\left\{%
                         \begin{array}{ll}
           ce_1$$-$$c\phi($$-$$cd^{-1}$$)^{-1}e_2,&\hbox{\noindent if $\emph{cd}\neq0$} \\
                       v, & \hbox{if \emph{cd}$$=$$0} \\
                            \end{array}%
                            \right.$\\
Note that if $f_u(v)=ac+bd=0$, where $ac$ and $bd$ both are nonzero,
then $a^{-1}b$$+$$cd^{-1}$$=$$0$ and hence $a^{-1}b$$=$$-cd^{-1}$.
So considering this relation,  one can see that $\overline{\phi}$ is
an automorphism of the graph $\digamma(\mathbb{V})$.
\end{remark}
The following result states the form of automorphisms for
$\digamma(\mathbb{V})$, where  $n=2$.
\begin{theorem}\label{formn=2} For $n$$=$$2$, let $\rho:V\rightarrow V$ be a
bijection
 on the vertex set of $\digamma(\mathbb{V})$. Then $\rho$ is an
automorphism of $\digamma(\mathbb{V})$ if and only if
$\rho=\delta\circ\chi_P \circ\overline{\phi}\circ\tau$, where
$\chi_P$ is the regular automorphism induced by an invertible matrix
$P$, $\phi$ is a permutation on $\mathbb{F}_q$ with $\phi(0)=0$,
$\overline{\phi}$ and $\delta$ are as just defined  and $\tau$ is a
permutation of twin points of $\digamma(\mathbb{V})$.
\end{theorem}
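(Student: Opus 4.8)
The plan is to mirror the one-factor-at-a-time reduction used for $n\geq3$ in Theorem \ref{n=3theorem}, peeling off the prescribed automorphisms until only a twin points permutation remains. Sufficiency is immediate: each of $\chi_P$, $\overline{\phi}$, $\delta$ and $\tau$ has already been verified to be an automorphism of $\digamma(\mathbb{V})$ (respectively in the definition of the regular automorphism, in Remark \ref{phibar}, in Remark \ref{sigma n=2}, and in the discussion of twin points permutations), and any composition of automorphisms is again an automorphism. So only necessity needs work.

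For necessity, start from an arbitrary automorphism $\rho$. First I would use the $\delta$ attached to $\rho$ in Remark \ref{sigma n=2}, where it is shown that $\delta^{-1}\circ\rho(\mathbb{V})=\mathbb{V}$ and that $\delta$ is an automorphism; replacing $\rho$ by $\delta^{-1}\circ\rho$, I may assume from now on that $\rho(\mathbb{V})=\mathbb{V}$. Next I set $P=(\rho(e_1),\rho(e_2))$, the matrix whose columns are $\rho(e_1),\rho(e_2)$. Since $e_1,e_2$ are linearly independent and $\rho(\mathbb{V})=\mathbb{V}$, Lemma \ref{a8} gives that $\rho(e_1),\rho(e_2)$ are linearly independent, so $P$ is invertible, and $\rho_1:=\chi_{P^{-1}}\circ\rho$ satisfies $\rho_1(e_i)=P^{-1}\rho(e_i)=e_i$. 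As $\rho_1$ still preserves $\mathbb{V}$, permutes $\Sigma$, and fixes the subspaces generated by $e_1$ and $e_2$, injectivity forces $\rho_1(S_{e_1+ae_2})=S_{e_1+\psi(a)e_2}$ for a permutation $\psi$ of $\mathbb{F}_q$ with $\psi(0)=0$ (the image can never be the class of $e_2$, which is already the fixed image of $e_2$).

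The heart of the argument, and the step I expect to be the main obstacle, is to show that this residual action is realized by a single $\overline{\phi}$. The idea is to track $\rho_1$ simultaneously on the vector subspaces and on the functional subspaces, coupled through the adjacency $f_u\sim v\Leftrightarrow u\cdot v=0$: in dimension two each neighbour set $N(S)$ is a single $\mathcal{F}_H$, so the image $\rho_1(\mathcal{F}_{e_1+ae_2})$ is forced by where $\rho_1$ sends the unique vector subspace adjacent to it. Using $N(\mathcal{F}_{e_1+ae_2})=S_{e_1-a^{-1}e_2}$ and $N(S_{e_1+be_2})=\mathcal{F}_{e_1-b^{-1}e_2}$ for nonzero parameters, the bookkeeping yields $\rho_1(\mathcal{F}_{e_1+ae_2})=\mathcal{F}_{e_1-\psi(-a^{-1})^{-1}e_2}$. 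I would then define $\phi$ by $\phi(0)=0$ and $\phi(t)=-\psi(-t^{-1})^{-1}$ for $t\neq0$; since $t\mapsto-t^{-1}$ is a bijection of the nonzero elements of $\mathbb{F}_q$, this $\phi$ is a permutation fixing $0$, as required. The crucial consistency check is that one and the same $\phi$ reproduces both sides of $\rho_1$: substituting into the formulas of Remark \ref{phibar} one gets $\overline{\phi}(e_1+ae_2)=e_1+\psi(a)e_2$ and $\overline{\phi}(\mathcal{F}_{e_1+ae_2})=\mathcal{F}_{e_1-\psi(-a^{-1})^{-1}e_2}$, so $\overline{\phi}$ and $\rho_1$ agree at the level of $1$-dimensional subspaces on both sides (they also both fix $e_1,e_2,f_{e_1},f_{e_2}$). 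This is exactly why $\overline{\phi}$ was given its inverse-and-negate form; verifying this double matching cleanly is the delicate point of the whole proof.

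Finally I would set $\tau:=\overline{\phi}^{-1}\circ\rho_1$. By the matching just established, $\tau$ fixes every $S\in\Sigma$ and every $\mathcal{F}_H$ setwise, so it stabilizes each twin class and hence is a permutation of twin points by Lemma \ref{a6}. Unwinding the substitutions then gives the claimed form: from $\tau=\overline{\phi}^{-1}\circ\rho_1$ one has $\rho_1=\overline{\phi}\circ\tau$; from $\rho_1=\chi_{P^{-1}}\circ\rho$ (for the already-reduced $\rho$) and $\chi_{P^{-1}}^{-1}=\chi_P$, which holds since $\chi_P\circ\chi_Q=\chi_{PQ}$, one recovers $\rho=\chi_P\circ\overline{\phi}\circ\tau$ after the reduction; and restoring the initial $\delta$ step yields $\rho=\delta\circ\chi_P\circ\overline{\phi}\circ\tau$, completing the proof.
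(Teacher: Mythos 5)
Your proposal is correct and takes essentially the same route as the paper's own proof: reduce by $\delta$ (Remark \ref{sigma n=2}) to the case $\rho(\mathbb{V})=\mathbb{V}$, normalize by $\chi_{P^{-1}}$ so that $e_1,e_2$ are fixed, extract the induced permutation of $\mathbb{F}_q$ and match it against $\overline{\phi}$, and absorb the remainder into a twin points permutation $\tau$; your relation $\phi(t)=-\psi(-t^{-1})^{-1}$ is exactly the paper's identity $\theta(a)=-\phi(-a^{-1})^{-1}$ solved for $\phi$. The only cosmetic differences are that the paper handles $\delta$ last (as its Case 2) and defines $\phi$ directly from the action on functionals, relating it to the vector-side permutation $\theta$ via the adjacency $f_{e_1-a^{-1}e_2}\sim e_1+ae_2$, whereas you start from the vector side and force the functional side through neighborhoods.
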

\begin{proof}
Note that  $\rho(\mathbb{V})=\mathbb{V}$ if and only if  $\delta$ is
identity. Then we have two cases as following:\\
\indent \textbf{Case 1)}  First we  assume $\delta$ is identity and
hence $\rho(\mathbb{V})=\mathbb{V}$. Similar as in the proof of
Theorem \ref{n=3theorem}, there exists a $2\times 2$ invertible
matrix $P$ such that $\chi_{_{P^{-1}}}\circ\rho(e_i)$$=$$e_i$ for
every $i=1, 2$. Denote $\chi_{_{P^{-1}}}\circ\rho$ by $\rho_1$. Then
similar as the proof of {\cite[Theorem 5.1]{XWang1}}, for any $a\in
\mathbb{F}_q$ there exists  a unique $a'\in \mathbb{F}_q$ such that
$\rho_1(f_{e_1}$$+$$af_{e_2})\equiv f_{e_1}$$+$$a'f_{e_2}$ (mod
$\mathbb{F}_q$). Thus $\rho_1$ induces a permutation $\phi$ on
$\mathbb{F}_q$ such that $\phi(0)=0$ and
$\rho_1(f_{e_1}+af_{e_2})$$\equiv$$f_{e_1}$$+$$\phi(a)f_{e_2}$ (mod
$\mathbb{F}_q$), and hence
$\rho_1(f_{e_1+ae_2})$$\equiv$$f_{e_1+\phi(a)e_2}$(mod
$\mathbb{F}_q$).  So the set $\{\rho_1(f_{e_1+ae_2}),
f_{e_1+\phi(a)e_2}\}$ is linearly dependent. Similarly, $\rho_1$
induces a permutation $\theta$ on $\mathbb{F}_q$ such that
$\theta(0)=0$ and $\rho_1(e_1+ae_2)$$\equiv$$e_1$$+$$\theta(a)e_2$
(mod $\mathbb{F}_q$). So the set $\{\rho_1(e_1+ae_2),
e_1$$+$$\theta(a)e_2\}$ is also linearly dependent.
 Since $\rho_1$ is an automorphism, for every element $0\neq a\in\mathbb{F}_q$,
 the adjacency $f_{e_1-a^{-1}e_2}$$\sim$$e_1+ae_2$
implies $\rho(f_{e_1-a^{-1}e_2})$$\sim$$\rho(e_1+ae_2)$. Hence
$f_{e_1+\phi( -a^{-1} )e_2}\sim e_1+\theta(a)e_2$ (as both of the
sets $\{\rho(e_1+ae_2), e_1+\theta(a)e_2\}$ and
$\{\rho(f_{e_1-a^{-1}e_2}), f_{e_1+\phi( -a^{-1} )e_2} \}$ are
linearly dependent and hence by Lemma \ref{a6} and Lemma \ref{a5}
they consist of twin points). Hence $\theta(a)=-\phi(-a^{-1})^{-1}$
for all $0\neq a\in \mathbb{F}_q$. So $\rho_1(e_1+ae_2)\equiv
e_1-\phi(-a^{-1})^{-1}e_2$ (mod $\mathbb{F}_q$).  Then the function
$\overline{\phi}$ as in the remark \ref{phibar} is an automorphism
of $\digamma(\mathbb{V})$.
 One can see that for all $a\in\mathbb{F}_q$:\par
 \noindent$(\overline{\phi^{-1}}\circ\rho_1)(e_1$$+$$ae_2)$$\equiv$$e_1+ae_2$ (mod
$\mathbb{F}_q$)\,\,\ and \,\
    $(\overline{\phi^{-1}}\circ\rho_1)(f_{e_1+ae_2})$$\equiv$$f_{e_1+ae_2}$ (mod
$\mathbb{F}_q$). Let $\tau= \overline{\phi^{-1}}\circ\rho_1$.
Similar as Claim 5 in the proof of Theorem \ref{n=3theorem} one can
see that $\tau$ is a permutation of twin points of the graph
$\digamma(\mathbb{V})$. Therefore $\rho=\chi_{_P}
\circ\overline{\phi}\circ\tau$.\par
 \indent \textbf{Case 2)} If $\delta$ is not identity then by Remark
 \ref{sigma n=2} $\delta^{-1}\circ\rho
(\mathbb{V})=\mathbb{V}$. Then in Case 1, one can replace $\rho$ by
the automorphism
  $\delta^{-1}\circ\rho$ of $\digamma(\mathbb{V})$. Then we have $\rho=\delta\circ \chi_P
  \circ \overline{\phi}\circ \tau$ and the proof is complete.
\end{proof}
\bibliographystyle{elsarticle-num}
 

 \end{document}